\def\Big#1{\makebox(0,0){\huge#1}}
\newtheorem{thm}{Theorem}[subsection]
\newtheorem{lem}[thm]{Lemma}
\newtheorem{prop}[thm]{Proposition}
\newtheorem{cor}[thm]{Corollary}
\theoremstyle{remark}
\newtheorem*{df}{Definition}
\newcommand{\eat}[1]{}
\newtheorem{property}[thm]{Property}
\DeclarePairedDelimiter{\seq}{\{}{\}}
\numberwithin{equation}{section}
\def\l@section{\@tocline{1}{0pt}{1pc}{}{}}
\def\l@subsection{\@tocline{2}{0pt}{1pc}{4.6em}{}}
\def\l@subsubsection{\@tocline{3}{0pt}{1pc}{7.6em}{}}
\renewcommand{\tocsection}[3]{%
  \indentlabel{\@ifnotempty{#2}{%
    \ignorespaces\@ifnotempty{#1}{\:}\textbf{#1} \makebox[1.5em][l]{\textbf{#2}.\hfill}}}\textbf{#3}}
\renewcommand{\tocsubsection}[3]{%
  \indentlabel{\@ifnotempty{#2}{\hspace*{2.3em}\makebox[2.3em][l]{%
    \ignorespaces#1 #2.\hfill}}}#3}
\renewcommand{\tocsubsubsection}[3]{%
  \indentlabel{\@ifnotempty{#2}{\hspace*{4.6em}\makebox[3em][l]{%
    \ignorespaces#1 #2.\hfill}}}#3}
\let\oldtocsection=\tocsection
\let\oldtocsubsection=\tocsubsection
\renewcommand{\tocsection}[2]{\hspace{0em}\oldtocsection{#1}{#2}}
\renewcommand{\tocsubsection}[2]{\hspace{-1em}\oldtocsubsection{#1}{#2}}
\begin{document}
\title[Recognizing signed-graphic matroids]{Recognizing signed-graphic matroids: Cylinder flips and the importance of column scaling}
\author[Lisa Seung-Yeon Lee]{Lisa Seung-Yeon Lee\\\\May 6, 2014}
\email{lilee@princeton.edu}
\maketitle
\vspace{-.33in}
\begin{abstract}
In this paper, we investigate the importance of column scaling in relating two signed-graphic representations of the same matroid. We used the Sage Mathematics software to generate many examples of signed-graphic matroids and their signed-graphic representations. Our examples show that column scaling is sometimes necessary in order to transform one signed-graphic representation into another; moreover, there exist many collections of signed-graphic representations that row-reduce to the same standard form. We also discuss an interesting matroid-preserving operation on a signed graph, which we call the cylinder flip, that relates certain pairs of signed-graphic representations of the same matroid.
\end{abstract}

\vspace{.05in}
\hskip0.2in{\sc Contents}
\vskip.07in
\renewcommand\contentsname{}
\tableofcontents

\newsavebox{\diffSomeone}
\savebox{\diffSomeone}{\scriptsize $\left(\begin{array}{rrrrrrrrrr}
0 & -1 & 0 & 0 & 0 & -1 & 0 & 0 & 0 & -1 \\
0 & 0 & 0 & -1 & 0 & 0 & -1 & 0 & 0 & 1 \\
1 & 0 & -1 & 1 & 0 & -1 & 0 & 0 & 0 & 0 \\
0 & -1 & 1 & 0 & 0 & 0 & 0 & -1 & 0 & 0 \\
1 & 0 & 0 & 0 & 1 & 0 & 0 & 1 & 1 & 0 \\
0 & 0 & 0 & 0 & 1 & 0 & 1 & 0 & -1 & 0
\end{array}\right)$}

\newsavebox{\diffSometwo}
\savebox{\diffSometwo}{\scriptsize $\left(\begin{array}{rrrrrrrrrr}
0 & -1 & 0 & -1 & 0 & -1 & -1 & 0 & 0 & 0 \\
1 & 0 & -1 & 1 & 0 & -1 & 0 & 0 & 0 & 0 \\
0 & -1 & 1 & 0 & 0 & 0 & 0 & -1 & 0 & 0 \\
1 & 0 & 0 & 0 & 1 & 0 & 0 & 0 & 0 & 1 \\
0 & 0 & 0 & 0 & 0 & 0 & 0 & -1 & -1 & 1 \\
0 & 0 & 0 & 0 & 1 & 0 & 1 & 0 & -1 & 0
\end{array}\right)$}

\newsavebox{\diffSomethree}
\savebox{\diffSomethree}{\scriptsize $\left(\begin{array}{rrrrrrrrrr}
0 & 0 & 0 & -1 & 1 & 0 & 0 & 1 & 0 & 0 \\
1 & 0 & 0 & 1 & 0 & 0 & 0 & 0 & 1 & 0 \\
0 & 0 & 0 & 0 & 1 & 0 & 1 & 0 & -1 & 0 \\
0 & -1 & 0 & 0 & 0 & -1 & 0 & 0 & 0 & -1 \\
1 & 0 & -1 & 0 & 0 & -1 & -1 & 0 & 0 & 1 \\
0 & -1 & 1 & 0 & 0 & 0 & 0 & -1 & 0 & 0
\end{array}\right)$}

\newsavebox{\diffSomefour}
\savebox{\diffSomefour}{\scriptsize $\left(\begin{array}{rrrrrrrrrr}
0 & 0 & 1 & 0 & 0 & 1 & 0 & 0 & 1 & 0 \\
0 & -1 & 1 & 0 & 0 & 0 & 0 & -1 & 0 & 0 \\
-1 & 1 & 0 & 0 & -1 & 1 & 0 & 0 & 0 & 0 \\
0 & 0 & 0 & 0 & 1 & 0 & 1 & 0 & -1 & 0 \\
-1 & 0 & 0 & -1 & 0 & 0 & 0 & 1 & 0 & -1 \\
0 & 0 & 0 & -1 & 0 & 0 & -1 & 0 & 0 & 1
\end{array}\right)$}

\newsavebox{\diffSomefive}
\savebox{\diffSomefive}{\scriptsize $\left(\begin{array}{rrrrrrrrrr}
0 & 0 & 1 & 0 & 0 & 1 & 0 & 0 & 1 & 0 \\
0 & -1 & 1 & 0 & 0 & 0 & 0 & -1 & 0 & 0 \\
-1 & 1 & 0 & 0 & 0 & 1 & 1 & 0 & -1 & 0 \\
-1 & 0 & 0 & 0 & -1 & 0 & 0 & 0 & 0 & -1 \\
0 & 0 & 0 & -1 & 0 & 0 & -1 & 0 & 0 & 1 \\
0 & 0 & 0 & -1 & 1 & 0 & 0 & 1 & 0 & 0
\end{array}\right)$}

\newsavebox{\diffSomesix}
\savebox{\diffSomesix}{\scriptsize $\left(\begin{array}{rrrrrrrrrr}
0 & 0 & 1 & 0 & 0 & 1 & 0 & 0 & 1 & 0 \\
0 & -1 & 1 & 0 & 0 & 0 & 0 & -1 & 0 & 0 \\
0 & 1 & 0 & 0 & 0 & 1 & 0 & 0 & 0 & 1 \\
1 & 0 & 0 & 0 & 0 & 0 & -1 & 0 & 1 & 1 \\
-1 & 0 & 0 & -1 & -1 & 0 & -1 & 0 & 0 & 0 \\
0 & 0 & 0 & 1 & -1 & 0 & 0 & -1 & 0 & 0
\end{array}\right)$}

\newsavebox{\diffSomeseven}
\savebox{\diffSomeseven}{\scriptsize $\left(\begin{array}{rrrrrrrrrr}
0 & 0 & 0 & -1 & 0 & 0 & -1 & 1 & 1 & 0 \\
1 & 0 & 0 & 1 & 0 & 0 & 0 & 0 & 1 & 0 \\
0 & 0 & -1 & 0 & -1 & -1 & -1 & 0 & 0 & 0 \\
1 & 0 & 0 & 0 & 1 & 0 & 0 & 0 & 0 & 1 \\
0 & 1 & 0 & 0 & 0 & 1 & 0 & 0 & 0 & 1 \\
0 & 1 & -1 & 0 & 0 & 0 & 0 & 1 & 0 & 0
\end{array}\right)$}

\newsavebox{\diffSomeeight}
\savebox{\diffSomeeight}{\scriptsize $\left(\begin{array}{rrrrrrrrrr}
0 & 0 & 0 & 0 & 0 & 0 & 0 & 1 & 1 & -1 \\
0 & 0 & 0 & -1 & 0 & 0 & -1 & 0 & 0 & 1 \\
1 & 0 & 0 & 1 & 0 & 0 & 0 & 0 & 1 & 0 \\
0 & 0 & -1 & 0 & -1 & -1 & -1 & 0 & 0 & 0 \\
1 & -1 & 0 & 0 & 1 & -1 & 0 & 0 & 0 & 0 \\
0 & 1 & -1 & 0 & 0 & 0 & 0 & 1 & 0 & 0
\end{array}\right)$}

\newsavebox{\diffSomenine}
\savebox{\diffSomenine}{\scriptsize $\left(\begin{array}{rrrrrrrrrr}
0 & 0 & 1 & 0 & 0 & 1 & 0 & 0 & 1 & 0 \\
0 & -1 & 1 & 0 & 0 & 0 & 0 & -1 & 0 & 0 \\
0 & 1 & 0 & 1 & 0 & 1 & 1 & 0 & 0 & 0 \\
1 & 0 & 0 & 1 & 0 & 0 & 0 & 0 & 1 & 0 \\
-1 & 0 & 0 & 0 & -1 & 0 & 0 & 0 & 0 & -1 \\
0 & 0 & 0 & 0 & -1 & 0 & -1 & -1 & 0 & 1
\end{array}\right)$}

\newsavebox{\diffSomerreftwo}
\savebox{\diffSomerreftwo}{\scriptsize $\left(\begin{array}{rrrrrrrrrr}
1 & 0 & 0 & 0 & 0 & 0 & -1 & 0 & 1 & 1 \\
0 & 1 & 0 & 0 & 0 & 1 & 0 & 0 & 0 & 1 \\
0 & 0 & 1 & 0 & 0 & 1 & 0 & 0 & 1 & 0 \\
0 & 0 & 0 & 1 & 0 & 0 & -2 & 0 & 0 & 2 \\
0 & 0 & 0 & 0 & 1 & 0 & 1 & 0 & -1 & 0 \\
0 & 0 & 0 & 0 & 0 & 0 & 0 & 1 & 1 & -1
\end{array}\right)$}

\newsavebox{\diffSometen}
\savebox{\diffSometen}{\scriptsize $\left(\begin{array}{rrrrrrrrrr}
-1 & 0 & 0 & 1 & 0 & 0 & -1 & 1 & 0 & 0 \\
1 & 0 & 0 & 0 & 1 & 0 & 0 & 0 & 0 & 1 \\
0 & 1 & 0 & 0 & 0 & 1 & 0 & 0 & 0 & 1 \\
0 & 1 & -1 & 0 & 0 & 0 & 0 & 1 & 0 & 0 \\
0 & 0 & 1 & 0 & 0 & 1 & 0 & 0 & 1 & 0 \\
0 & 0 & 0 & 0 & 1 & 0 & 1 & 0 & -1 & 0
\end{array}\right)$}

\newsavebox{\diffSomerrefthree}
\savebox{\diffSomerrefthree}{\scriptsize $\left(\begin{array}{rrrrrrrrrr}
1 & 0 & 0 & 0 & 0 & 0 & -1 & 0 & 1 & 1 \\
0 & 1 & 0 & 0 & 0 & 1 & 0 & 0 & 0 & 1 \\
0 & 0 & 1 & 0 & 0 & 1 & 0 & 0 & 1 & 0 \\
0 & 0 & 0 & 1 & 0 & 0 & 1 & 0 & 0 & -1 \\
0 & 0 & 0 & 0 & 1 & 0 & -2 & 0 & 2 & 0 \\
0 & 0 & 0 & 0 & 0 & 0 & 0 & 1 & 1 & -1
\end{array}\right)$}

\newsavebox{\diffSomeeleven}
\savebox{\diffSomeeleven}{\scriptsize $\left(\begin{array}{rrrrrrrrrr}
-1 & 0 & 0 & 0 & 1 & 0 & -1 & -1 & 0 & 0 \\
1 & 0 & 0 & 1 & 0 & 0 & 0 & 0 & 1 & 0 \\
0 & 0 & 0 & 1 & 0 & 0 & 1 & 0 & 0 & -1 \\
0 & 1 & 0 & 0 & 0 & 1 & 0 & 0 & 0 & 1 \\
0 & 1 & -1 & 0 & 0 & 0 & 0 & 1 & 0 & 0 \\
0 & 0 & 1 & 0 & 0 & 1 & 0 & 0 & 1 & 0
\end{array}\right)$}

\newsavebox{\sameAllone}
\savebox{\sameAllone}{\scriptsize $\left(\begin{array}{rrrrrrrrrr}
0 & 0 & 1 & 0 & 1 & 0 & 0 & 0 & 0 & 1 \\
0 & 1 & 0 & 0 & 0 & 0 & -1 & -1 & 0 & 1 \\
0 & 0 & 1 & -1 & 0 & 0 & 0 & 0 & 1 & 0 \\
0 & 1 & 0 & 1 & 0 & 1 & 1 & 0 & 0 & 0 \\
1 & 0 & 0 & 0 & 1 & 0 & 0 & -1 & 0 & 0 \\
1 & 0 & 0 & 0 & 0 & -1 & 0 & 0 & 1 & 0
\end{array}\right)$}

\newsavebox{\sameAlltwo}
\savebox{\sameAlltwo}{\scriptsize $\left(\begin{array}{rrrrrrrrrr}
0 & 0 & 0 & 0 & 0 & -1 & -1 & 0 & 0 & 1 \\
0 & 1 & 0 & 1 & 0 & 0 & 0 & 0 & 0 & 1 \\
0 & 0 & -1 & 1 & 0 & 0 & 0 & 0 & -1 & 0 \\
0 & -1 & 1 & 0 & 1 & 0 & 1 & 1 & 0 & 0 \\
1 & 0 & 0 & 0 & 1 & 0 & 0 & -1 & 0 & 0 \\
1 & 0 & 0 & 0 & 0 & -1 & 0 & 0 & 1 & 0
\end{array}\right)$}

\newsavebox{\sameAllthree}
\savebox{\sameAllthree}{\scriptsize $\left(\begin{array}{rrrrrrrrrr}
0 & 0 & 0 & 0 & 0 & 1 & 1 & 0 & 0 & -1 \\
-1 & 0 & 1 & 0 & 0 & 0 & 0 & 1 & 0 & 1 \\
0 & 0 & 1 & -1 & 0 & 0 & 0 & 0 & 1 & 0 \\
1 & 0 & 0 & 1 & 1 & 0 & 1 & 0 & 0 & 0 \\
0 & -1 & 0 & 0 & 1 & 0 & 0 & 0 & -1 & 0 \\
0 & 1 & 0 & 0 & 0 & 1 & 0 & -1 & 0 & 0
\end{array}\right)$}

\newsavebox{\sameAllfour}
\savebox{\sameAllfour}{\scriptsize $\left(\begin{array}{rrrrrrrrrr}
0 & 0 & 0 & 1 & 0 & 0 & 1 & 1 & 0 & 0 \\
0 & 1 & 0 & 1 & 0 & 0 & 0 & 0 & 0 & 1 \\
0 & 0 & 1 & 0 & 1 & 0 & 0 & 0 & 0 & 1 \\
0 & 1 & 1 & 0 & 0 & 1 & 1 & 0 & 1 & 0 \\
1 & 0 & 0 & 0 & 1 & 0 & 0 & -1 & 0 & 0 \\
1 & 0 & 0 & 0 & 0 & -1 & 0 & 0 & 1 & 0
\end{array}\right)$}

\newsavebox{\sameAllfive}
\savebox{\sameAllfive}{\scriptsize $\left(\begin{array}{rrrrrrrrrr}
0 & 0 & 0 & 1 & 0 & 0 & 1 & 1 & 0 & 0 \\
1 & 0 & -1 & 1 & 0 & -1 & 0 & 0 & 0 & 0 \\
0 & 0 & 1 & 0 & 1 & 0 & 0 & 0 & 0 & 1 \\
-1 & 0 & 0 & 0 & 0 & 0 & -1 & 0 & -1 & 1 \\
0 & -1 & 0 & 0 & 1 & 0 & 0 & 0 & -1 & 0 \\
0 & 1 & 0 & 0 & 0 & 1 & 0 & -1 & 0 & 0
\end{array}\right)$}

\newsavebox{\sameAllsix}
\savebox{\sameAllsix}{\scriptsize $\left(\begin{array}{rrrrrrrrrr}
0 & 0 & 0 & 1 & 0 & 0 & 1 & 1 & 0 & 0 \\
0 & 1 & -1 & 1 & -1 & 0 & 0 & 0 & 0 & 0 \\
0 & 1 & 1 & 0 & 0 & 0 & 0 & 0 & 1 & 1 \\
0 & 0 & 0 & 0 & 0 & -1 & -1 & 0 & 0 & 1 \\
1 & 0 & 0 & 0 & 1 & 0 & 0 & -1 & 0 & 0 \\
1 & 0 & 0 & 0 & 0 & -1 & 0 & 0 & 1 & 0
\end{array}\right)$}

\newsavebox{\sameAllseven}
\savebox{\sameAllseven}{\scriptsize $\left(\begin{array}{rrrrrrrrrr}
-1 & -1 & 0 & 0 & 0 & 0 & 0 & 1 & -1 & 0 \\
-1 & 1 & 0 & 0 & -1 & 1 & 0 & 0 & 0 & 0 \\
0 & 0 & 1 & 0 & 1 & 0 & 0 & 0 & 0 & 1 \\
0 & 0 & 1 & -1 & 0 & 0 & 0 & 0 & 1 & 0 \\
0 & 0 & 0 & 1 & 0 & 0 & 1 & 1 & 0 & 0 \\
0 & 0 & 0 & 0 & 0 & -1 & -1 & 0 & 0 & 1
\end{array}\right)$}

\newsavebox{\sameAlleight}
\savebox{\sameAlleight}{\scriptsize $\left(\begin{array}{rrrrrrrrrr}
0 & 0 & 1 & 0 & 1 & 1 & 1 & 0 & 0 & 0 \\
0 & 0 & 1 & -1 & 0 & 0 & 0 & 0 & 1 & 0 \\
1 & 0 & 0 & 0 & 1 & 0 & 0 & -1 & 0 & 0 \\
0 & 1 & 0 & 1 & 0 & 0 & 0 & 0 & 0 & 1 \\
-1 & 0 & 0 & 0 & 0 & 0 & -1 & 0 & -1 & 1 \\
0 & 1 & 0 & 0 & 0 & 1 & 0 & -1 & 0 & 0
\end{array}\right)$}

\newsavebox{\sameAllnine}
\savebox{\sameAllnine}{\scriptsize $\left(\begin{array}{rrrrrrrrrr}
0 & 0 & 0 & 0 & 0 & 1 & 1 & 0 & 0 & -1 \\
0 & 0 & 1 & 0 & 1 & 0 & 0 & 0 & 0 & 1 \\
0 & 0 & 1 & -1 & 0 & 0 & 0 & 0 & 1 & 0 \\
1 & 0 & 0 & 0 & 1 & 0 & 0 & -1 & 0 & 0 \\
1 & 1 & 0 & 1 & 0 & 0 & 1 & 0 & 1 & 0 \\
0 & 1 & 0 & 0 & 0 & 1 & 0 & -1 & 0 & 0
\end{array}\right)$}

\newsavebox{\sameAllten}
\savebox{\sameAllten}{\scriptsize $\left(\begin{array}{rrrrrrrrrr}
1 & 0 & 0 & 0 & 0 & -1 & 0 & 0 & 1 & 0 \\
-1 & 0 & 0 & -1 & -1 & 0 & -1 & 0 & 0 & 0 \\
0 & 0 & 1 & 0 & 1 & 0 & 0 & 0 & 0 & 1 \\
0 & 0 & 1 & 0 & 0 & 0 & 1 & 1 & 1 & 0 \\
0 & 1 & 0 & 1 & 0 & 0 & 0 & 0 & 0 & 1 \\
0 & 1 & 0 & 0 & 0 & 1 & 0 & -1 & 0 & 0
\end{array}\right)$}

\newsavebox{\sameAlleleven}
\savebox{\sameAlleleven}{\scriptsize $\left(\begin{array}{rrrrrrrrrr}
0 & 0 & 0 & 0 & 0 & 1 & 1 & 0 & 0 & -1 \\
0 & 1 & 0 & 1 & 0 & 0 & 0 & 0 & 0 & 1 \\
0 & 1 & 0 & 0 & -1 & 0 & 0 & 0 & 1 & 0 \\
1 & 0 & 0 & 0 & 1 & 0 & 0 & -1 & 0 & 0 \\
1 & 0 & -1 & 1 & 0 & -1 & 0 & 0 & 0 & 0 \\
0 & 0 & 1 & 0 & 0 & 0 & 1 & 1 & 1 & 0
\end{array}\right)$}

\newsavebox{\sameAlltwelve}
\savebox{\sameAlltwelve}{\scriptsize $\left(\begin{array}{rrrrrrrrrr}
0 & 0 & 0 & 1 & 0 & 0 & 1 & 1 & 0 & 0 \\
0 & 1 & 0 & 1 & 0 & 0 & 0 & 0 & 0 & 1 \\
-1 & 0 & 1 & 0 & 0 & 0 & 0 & 1 & 0 & 1 \\
0 & 0 & 1 & 0 & 1 & 1 & 1 & 0 & 0 & 0 \\
0 & -1 & 0 & 0 & 1 & 0 & 0 & 0 & -1 & 0 \\
1 & 0 & 0 & 0 & 0 & -1 & 0 & 0 & 1 & 0
\end{array}\right)$}

\newsavebox{\sameAllthirteen}
\savebox{\sameAllthirteen}{\scriptsize $\left(\begin{array}{rrrrrrrrrr}
0 & 0 & 0 & 1 & 0 & 0 & 1 & 1 & 0 & 0 \\
1 & 1 & -1 & 1 & 0 & 0 & 0 & -1 & 0 & 0 \\
0 & 0 & 1 & 0 & 1 & 0 & 0 & 0 & 0 & 1 \\
0 & 0 & 0 & 0 & 0 & -1 & -1 & 0 & 0 & 1 \\
1 & 0 & 0 & 0 & 0 & -1 & 0 & 0 & 1 & 0 \\
0 & -1 & 0 & 0 & 1 & 0 & 0 & 0 & -1 & 0
\end{array}\right)$}

\newsavebox{\sameAllfourteen}
\savebox{\sameAllfourteen}{\scriptsize $\left(\begin{array}{rrrrrrrrrr}
0 & 0 & 0 & 1 & 0 & 0 & 1 & 1 & 0 & 0 \\
0 & 0 & -1 & 1 & 0 & 0 & 0 & 0 & -1 & 0 \\
0 & 0 & -1 & 0 & -1 & 0 & 0 & 0 & 0 & -1 \\
-1 & 1 & 0 & 0 & -1 & 0 & -1 & 0 & 0 & 1 \\
0 & 1 & 0 & 0 & 0 & 1 & 0 & -1 & 0 & 0 \\
1 & 0 & 0 & 0 & 0 & -1 & 0 & 0 & 1 & 0
\end{array}\right)$}

\newsavebox{\sameAllfifteen}
\savebox{\sameAllfifteen}{\scriptsize $\left(\begin{array}{rrrrrrrrrr}
0 & 0 & 0 & -1 & 0 & 0 & -1 & -1 & 0 & 0 \\
0 & 0 & -1 & 1 & 0 & 0 & 0 & 0 & -1 & 0 \\
-1 & 1 & 1 & 0 & 0 & 1 & 0 & 0 & 0 & 1 \\
0 & 0 & 0 & 0 & 0 & -1 & -1 & 0 & 0 & 1 \\
0 & 1 & 0 & 0 & -1 & 0 & 0 & 0 & 1 & 0 \\
1 & 0 & 0 & 0 & 1 & 0 & 0 & -1 & 0 & 0
\end{array}\right)$}

\newsavebox{\diffAllone}
\savebox{\diffAllone}{\scriptsize $\left(\begin{array}{rrrrrrrrr}
-1 & 0 & 0 & 1 & 0 & 0 & -1 & 1 & 0 \\
1 & 0 & 0 & 0 & 0 & -1 & 0 & 0 & 1 \\
0 & 1 & 0 & 0 & -1 & 0 & 0 & 0 & 1 \\
0 & 1 & -1 & 0 & 0 & 0 & 0 & 1 & 0 \\
0 & 0 & 1 & 0 & 1 & 1 & 1 & 0 & 0
\end{array}\right)$}

\newsavebox{\diffAlltwo}
\savebox{\diffAlltwo}{\scriptsize $\left(\begin{array}{rrrrrrrrr}
1 & 0 & 0 & 1 & 1 & 0 & 1 & 0 & 0 \\
0 & 0 & 0 & 1 & -1 & 0 & 0 & -1 & 0 \\
1 & 0 & 0 & 0 & 0 & -1 & 0 & 0 & 1 \\
0 & 1 & 1 & 0 & 0 & 1 & 1 & 0 & 1 \\
0 & 1 & -1 & 0 & 0 & 0 & 0 & 1 & 0
\end{array}\right)$}

\newsavebox{\diffAllthree}
\savebox{\diffAllthree}{\scriptsize $\left(\begin{array}{rrrrrrrrr}
0 & -1 & 0 & -1 & 0 & -1 & -1 & 0 & 0 \\
0 & 0 & -1 & 1 & 0 & 0 & 0 & 0 & -1 \\
0 & -1 & 1 & 0 & 0 & 0 & 0 & -1 & 0 \\
1 & 0 & 0 & 0 & 0 & -1 & 0 & 0 & 1 \\
1 & 0 & 0 & 0 & -1 & 0 & 1 & 1 & 0
\end{array}\right)$}

\newpage
\section{Introduction}

A class $\mathcal{M}$ of matroids is \emph{polynomial-time recognizable} if there is a polynomial $f(x)$ and an algorithm that determines after at most $f(n)$ rank evaluations, whether or not a fixed $n$-element matroid $M$ is in $\mathcal{M}$. Seymour (1981a) proved that the class of binary matroids is not polynomial-time recognizable, although the class of graphic matroids is. Later, Truemper (1982b) proved that the class of regular matroids is polynomial-time recognizable. Geelen and Mayhew independently showed that if a signed-graphic matroid is given by a rank oracle, then it is not polynomial-time recognizable. Since every signed-graphic matroid is a dyadic matroid, it follows that the class of dyadic matroids is not polynomial-time recognizable.

A matrix $A$ is \emph{binet} if $[I\mid A]$ is row-equivalent to an incidence matrix of a signed graph. In his doctoral thesis \cite{M}, Musitelli presents a first polynomial-time algorithm for recognizing binet matrices. In particular, his algorithm inputs a matrix $A$ that is potentially binet, and tries to reverse-engineer (using only row operations) the matrix into a signed-graphic incidence matrix. If $A$ is binet, then his algorithm finds a sequence of row operations bringing the matrix into a signed-graphic incidence matrix; otherwise, his algorithm determines that $A$ is not binet. 

However, we saw a number of concerns in Musitelli's work. \eat{Not all signed-graphic representations of a matroid are row-equivalent, but }Musitelli considers only row operations, and not column scaling, in order to recover a signed graph matrix from a binet one. Moreover, several different signed graph matrices can have the same row-reduced standard representation, but Musitelli does not mention which signed graph his algorithm returns, given a binet matrix.

This research was motivated by the desire to investigate the importance of column scaling in relating two signed-graphic representations of the same matroid.

\vskip-0.2in
\subsection{Our approach}

We used the Sage Mathematics software \cite{sage} to generate many examples of signed-graphic matroids and their signed-graphic representations. In particular, we first generated all non-isomorphic 3-connected uniquely $\mathbb{D}$-representable matroids of size up to 10 that could potentially be signed-graphic; then for each of these matroids, we computed all its non-isomorphic signed-graphic representations, and used Sage Notebook to draw the corresponding signed graphs. Our algorithms are discussed in Sections \ref{Section:alg_matroid} and \ref{Section:alg_reps}.\eat{; the Sage implementation of our algorithm can be found in Appendix \ref{App:Sage}.}

Using our Sage code, we were able to generate many interesting examples of signed graphs. Out of the 69 dyadic matroids we generated, 13 matroids had signed-graphic representations such that each pair of representations is row-equivalent; 39 had signed-graphic representations such that each pair is not row-equivalent; and the remaining 17 had signed-graphic representations such that some pairs are row-equivalent and some are not. Appendix \ref{App:SignedGraphs} provides an example of each kind.

In Section \ref{Section:Matroid-preserving_operations} we discuss an interesting matroid-preserving operation on a signed graph, which we call the \emph{cylinder flip}, that relates certain pairs of signed-graphic representations of the same matroid. Then in Section \ref{Section:Restricting_column_scaling} we present the Brylawski-Lucas Theorem, and illustrate how restricting column scaling to the factors $\pm 1$ weakens the theorem.

\vspace{-8pt}

\section{Definitions and notation}

We assume familiarity with matroid theory and graph theory, and refer to \cite{oxley} for any concepts and notation that remain undefined.

\eat{\subsection{Extensions and coextensions}
If a matroid $M$ is obtained from a matroid $N$ by deleting a non-empty subset $T$ of $E(N)$, then $N$ is called an \emph{extension} of $M$. In particular, if $|T|=1$, then $N$ is a \emph{single-element extension} of $M$. If $N^*$ is an extension of $M^*$, then $N$ is called a \emph{coextension} of $M$.}

\subsection{Signed graphs}

The tuple $\Omega = (G, \Sigma)$, where $G=(V, E)$ is a graph and $\Sigma \subseteq E$, is a \emph{signed graph}, the edges in $\Sigma$ being the \emph{negative} edges. The \emph{sign} of a path or cycle in $G$ is \emph{positive} if there is an even number of negative edges in it, and \emph{negative} otherwise.

\subsubsection{Signed graph matrix}
A signed graph is \emph{oriented} when each end of each edge is given a direction, so that in a positive edge the ends are both directed from one endpoint to the other, and in a negative edge either both ends are directed outward, to their own vertices, or both are directed inward, away from their vertices. Note that an oriented signed graph is a bidirected graph.

Given an orientation of a signed graph $\Omega$, its \emph{incidence matrix} is a $|V| \times |E|$ matrix over $GF(3)$, with a row for each vertex and a column for each edge, whose $ij$th entry is
	\[
	a_{ij} = \begin{cases}
	0 & \textrm{if vertex $i$ and edge $j$ are not adjacent, or if edge $j$ is a positive loop in $\Omega$}\\
	1 & \textrm{if edge $j$ is oriented into vertex $i$}\\
	-1& \textrm{if edge $j$ is oriented out of vertex $i$, or if edge $j$ is a negative loop incident with vertex $i$ in $\Omega$}\\
	\eat{0 & \textrm{if edge $j$ is a positive loop in $\Omega$}\\
	-1& \textrm{if edge $j$ is a negative loop incident with vertex $i$ in $\Omega$}\\}
	\end{cases}
	\]

\subsubsection{Binet matrices}

A matrix $A$ is called a \emph{binet matrix} if there exist both an integral matrix $M$ of full rank satisfying
    \[ \sum\limits_{i=1}^n |M_{ij}| \leq 2 \qquad \textrm{for any column index $j$} \]
and a basis $B$ of it such that $M = [B\mid N]$ (up to column permutation) and $A = B^{-1}N$. In other words, $A$ is binet if $[I\mid A]$ is row-equivalent to the incidence matrix of a bidirected graph.\footnote{Musitelli's definition of the node-edge incidence matrix of a bidrected graph is different from ours in that he distinguishes between half-edges and loops. For our purposes, this distinction is irrelevant and a mere semantic difference.} Because a signed graph matrix is the incidence matrix of a bidirected graph, the signed graph matrices row-reduced into standard form are a subset of binet matrices.

\subsection{Partial fields} A \emph{partial field} is a pair $\mathbb{P}=(R, G)$, where $R$ is a commutative ring, and $G \subseteq R^*$ is a subgroup such that $-1 \in G$. If $p \in G \cup \{ 0\}$, then we say $p$ is an \emph{element} of $\mathbb{P}$, and write $p \in \mathbb{P}$.  

\subsubsection{Matrix representation over a partial field}
Let $A$ be a matrix over $R$ having $r$ rows. Then $A$ is a \emph{weak $\mathbb{P}$-matrix} if, for each $r \times r$ submatrix $D$ of $A$, we have $\det(D) \in G \cup \{ 0 \}$.

A weak $\mathbb{P}$-matrix $A$ is a \emph{matrix representation} of a matroid $M$ on elements $E$ if for every $r$-subset $X \subseteq E$, the submatrix $A[X]$ of $A$ (indexed by the elements in $X$) has the property that
\[ \det(A[X]) \neq 0 \qquad\Leftrightarrow\qquad \textrm{$X$ is a basis of $M$}, \]
and there exists at least one $r$-subset $B \subseteq E$ such that $\det(A[B]) \neq 0$. If such a matrix representation exists for a matroid $M$, then we say $M$ is \emph{representable} over $\mathbb{P}$.

\eat{We say a matroid $M = (E, \mathcal{B})$ is \emph{representable} over a partial field $\mathbb{P}=(R, G)$ if every entry of its matrix representation is in the ring $R$ and all nonzero subdeterminants are in $G$.}

\subsection{Dyadic matroids}
The \emph{dyadic partial field} is
	\[ \mathbb{D} := (\mathbb{Q}, \{\pm 2^k : k \in \mathbb{Z} \}). \]
A \emph{dyadic matrix} is a matrix over $\mathbb{Q}$ all of whose nonzero subdeterminants are in $\{\pm 2^i : i \in \mathbb{Z} \}$. A matroid is \emph{dyadic} if it is representable over $\mathbb{D}$. The following theorem provides equivalent definitions for a dyadic matroid:

\eat{Dyadic matroids are a natural analogue of totally unimodular matrices.
A matroid $M$ is \emph{dyadic} if there is a dyadic matrix $A$ such that $M \cong M[A]$.}

\begin{thm}[Whittle, 1997] 
The following are equivalent for a matroid $M$.
\begin{enumerate}[\textnormal{(}i\textnormal{)}]
\item $M$ is dyadic.
\item $M$ is representable over $GF(3)$ and $GF(5)$.
\item $M$ is representable over $GF(p)$ for all odd primes $p$.
\item $M$ is representable over $GF(3)$ and $\mathbb{Q}$.
\item $M$ is representable over $GF(3)$ and $\mathbb{R}$.
\item $M$ is representable over $GF(3)$ and $GF(q)$ where $q$ is an odd prime power such that $q \equiv 2 \mod 3$.
\end{enumerate}
\end{thm}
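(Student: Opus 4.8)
The plan is to separate the easy ``forward'' implications, in which $(i)$ yields each of the other conditions, from the one genuinely substantial phenomenon, which is that a ternary matroid with even one further representation of a suitable type is already dyadic.

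\emph{Forward direction.} Suppose $M$ is dyadic and fix a $\mathbb{D}$-matrix representation $A$ over $\mathbb{Q}$: every entry of $A$ and every maximal subdeterminant of $A$ lies in $\{0\}\cup\{\pm 2^{k}:k\in\mathbb{Z}\}$. For any odd prime $p$ the element $2$ is a unit of the localization $\mathbb{Z}_{(p)}$, so $A$ already has all of its entries in $\mathbb{Z}_{(p)}$; applying the ring homomorphism $\mathbb{Z}_{(p)}\to GF(p)$ entrywise sends each maximal subdeterminant of $A$ to its residue mod $p$, which vanishes exactly when the original did. Hence the reduced matrix represents the same matroid over $GF(p)$, giving $(i)\Rightarrow(iii)$; and $(iii)$ yields $(ii)$ on taking $p=5$ and $(vi)$ on taking $q=5$ (noting $5\equiv 2\bmod 3$). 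Since $\pm 2^{k}\neq 0$ in both $\mathbb{Q}$ and $\mathbb{R}$, the matrix $A$ itself witnesses $(iv)$ and $(v)$. It therefore remains only to prove that each of $(ii),(iv),(v),(vi)$ implies $(i)$, and since $(iii)\Rightarrow(ii)$ this closes the loop.

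\emph{Reduction to a lifting lemma and to the $3$-connected case.} All four remaining implications are instances of one statement: \emph{if a matroid $N$ is representable over $GF(3)$ and also over a field $\mathbb{F}$ of characteristic not $2$ or $3$ that contains no primitive cube root of unity, then $N$ is dyadic.} One takes $\mathbb{F}=GF(5),\ \mathbb{Q},\ \mathbb{R},\ GF(q)$ with $q\equiv 2\bmod 3$ respectively; the cube-root hypothesis holds automatically in each case and is essential, since for $q\equiv 1\bmod 3$ there exist ternary matroids representable over $GF(q)$ that are not dyadic. To prove the lemma I would first reduce to $3$-connected $N$: representability over a partial field is preserved under $1$-sums and $2$-sums, every matroid is built from $3$-connected matroids, circuits, and cocircuits by such sums, circuits and cocircuits are regular (hence dyadic), and each $3$-connected piece is a minor of $N$ and so inherits representability over both $GF(3)$ and $\mathbb{F}$. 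So assume $N$ is $3$-connected.

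\emph{Core argument.} Now invoke that a $3$-connected ternary matroid has, up to row operations and column scalings, a \emph{unique} representation over $GF(3)$ (Brylawski--Lucas). Fix such a representation $[I\mid D]$ with $D$ over $\{0,\pm 1\}$ and consider the lifts $[I\mid\widetilde D]$ obtained by replacing each nonzero entry of $D$ by a rational unit. Reducing mod $3$ already pins down the correct family of bases up to sign, so $[I\mid\widetilde D]$ represents $N$ over $\mathbb{D}$ as soon as every relevant subdeterminant is forced to be a signed power of $2$; these are local constraints, most naturally organized through the cross-ratios of the configuration. In a $3$-connected ternary matroid every nondegenerate cross-ratio equals the $GF(3)$-element $-1$, and the hypothesis on $\mathbb{F}$ forces the cross-ratios of any $\mathbb{F}$-representation to lie in $\{-1,2,\tfrac12\}$, precisely the fundamental elements of $\mathbb{D}$. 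One then propagates these locally admissible choices into a single globally consistent lift $\widetilde D$ using $3$-connectivity and the chain-group/bridge machinery underlying the uniqueness theorem; the resulting $[I\mid\widetilde D]$ is the desired $\mathbb{D}$-representation. (Equivalently, one may phrase the argument through the universal partial field $\mathbb{P}_{N}$: the hypotheses give homomorphisms $\mathbb{P}_{N}\to GF(3)$ and $\mathbb{P}_{N}\to\mathbb{F}$, and for $3$-connected ternary $N$ the structure of $\mathbb{P}_{N}$ is constrained enough that these produce a homomorphism $\mathbb{P}_{N}\to\mathbb{D}$.) I expect the main obstacle to be exactly the gluing step: local satisfiability of each cross-ratio constraint over $\mathbb{D}$ is elementary, but assembling the choices into one coherent matrix requires controlling how cross-ratios in overlapping configurations interact across the matroid, and it is here that $3$-connectivity together with the comparison with the auxiliary field $\mathbb{F}$ — which already certifies that a globally consistent solution exists somewhere — does the real work.
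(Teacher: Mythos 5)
The paper does not prove this theorem: it is quoted verbatim as background from Whittle (1997) and used as a black box (see also Oxley, \S 14.7, and the partial-field treatment in van Zwam's thesis, which is already in the bibliography). So there is no in-paper proof to compare against; what follows is an assessment of your proposal on its own terms.

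Your forward direction is correct and complete: a dyadic matrix has entries and nonzero maximal subdeterminants in $\{\pm 2^k\}$, these are units in $\mathbb{Z}_{(p)}$ for every odd prime $p$, and reduction mod $p$ is a partial-field homomorphism preserving which subdeterminants vanish; this gives (i)$\Rightarrow$(iii) and hence (ii), (iv), (v), (vi). Your reduction of the converse to a single lifting lemma over a field $\mathbb{F}$ of characteristic $\neq 2,3$ with no primitive cube root of unity is also the right organizing move (and correctly covers $GF(q)$ with $q\equiv 2 \bmod 3$, since that congruence is exactly the no-cube-root condition), as is the reduction to the $3$-connected case via the Cunningham--Edmonds decomposition.

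The gap is in the ``core argument,'' where the entire content of Whittle's theorem actually lives, and you have asserted rather than proved the two steps that constitute it. First, the claim that the hypothesis on $\mathbb{F}$ forces every cross-ratio of an $\mathbb{F}$-representation of a $3$-connected ternary matroid into $\{-1,2,\tfrac12\}$ is not a local triviality: a single $U_{2,4}$-minor imposes no constraint on its $\mathbb{F}$-cross-ratio beyond avoiding $0$ and $1$, and extracting the dyadic fundamental elements requires analysing how cross-ratios of overlapping rank-$2$ and whirl-type minors interact, which is where the absence of cube roots of unity (and the exclusion of the $\sqrt[6]{1}$-behaviour that occurs when $q\equiv 1\bmod 3$) actually gets used. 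Second, the ``propagation'' of locally admissible lifts into one globally consistent $\mathbb{D}$-matrix is precisely the hard part of Whittle's roughly forty-page argument (or, in the modern formulation you allude to parenthetically, the Lift Theorem of Pendavingh and van Zwam applied to the product partial field $GF(3)\otimes\mathbb{F}$); you explicitly defer it to ``chain-group/bridge machinery'' without supplying it. In short, the architecture is sound and you have correctly located the difficulty, but the proof of (ii)/(iv)/(v)/(vi)$\Rightarrow$(i) is not there; for the purposes of this paper the honest move is to cite Whittle or the Lift Theorem rather than to reprove the result.
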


Dyadic matroids are a natural analogue of regular matroids. Notably, Seymour's Decomposition Theorem for totally unimodular matrices characterizes the regular matroids as the 3-sums of graphic matroids, cographic matroids, and a certain 10-element matroid. In a similar spirit, as graphic matroids are to regular matroids, the signed-graphic matroids are a fundamental subclass of dyadic matroids.

\subsection{Signed-graphic matroids}
A matroid is \emph{signed-graphic} if it can be represented by an incidence matrix of a signed graph. Since the incidence matrix of a signed graph is a matrix over $GF(3)$ whose columns have at most 2 nonzero entries each, it follows that every signed-graphic matroid is dyadic. We say two signed-graphic representations are \emph{row-equivalent} if one can be obtained from the other via a sequence of elementary row operations.

\subsubsection{Circuits and bases}
The circuits of a signed graph are precisely the positive cycles, pairs of negative cycles that meet exactly in a vertex, and pairs of disjoint negative cycles, together with any path connecting them.

By a slight abuse of definition, we call an independent set of edges a \emph{forest} of the signed graph. Since every dependent set has a connected component containing a positive cycle or two negative cycles, the forests of a signed graph are precisely the subgraphs in which every connected component contains at most one negative cycle (and no positive cycles). A basis of the signed-graphic matroid corresponds to a \emph{spanning forest} of the signed graph.

\subsubsection{Resigning edges of a signed-graphic representation.}
For two sets $S$, $T$ we define \[ S\Delta T := \{ s \in S \mid s \notin T \} \cup \{t \in T \mid t \notin S \}.\] If $G = (V,E)$ is a graph, and $S \subseteq V$, then $\delta(S) := \seq*{e =uv \in E \mid u \in S, v \notin S }$.

Let $\Omega = (G, \Sigma)$ be a signed graph. For a vertex $v$, we can \emph{resign} $\Omega$ around $v$ by replacing $\Sigma$ by $\Sigma \Delta \delta(v)$. Resigning edges across a vertex set corresponds to row scaling by -1, and hence preserves the corresponding matroid $M(\Omega)$. We state this in a lemma:

\begin{lem}\label{Lem:resigning}Let $\Omega$ be the signed-graphic representation of a matroid $M$. For a vertex $v \in V(\Omega)$, let $\Omega'$ be obtained by resigning $\Omega$ around $v$. Then $M(\Omega) \cong M(\Omega')$.\end{lem}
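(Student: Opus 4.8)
The plan is to give two complementary arguments and to base the proof on the first, which is cleaner and shows directly that $M(\Omega)$ and $M(\Omega')$ are the \emph{same} matroid on the ground set $E$, not merely isomorphic ones. The second argument recovers the statement in the form asserted in the paragraph preceding the lemma, namely that resigning around $v$ is precisely row scaling by $-1$.

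First I would record that $\Omega$ and $\Omega'$ share the same underlying graph $G=(V,E)$; only the sign set changes, from $\Sigma$ to $\Sigma' := \Sigma\,\Delta\,\delta(v)$. The key claim is that every cycle $C$ of $G$ has the same sign in $\Omega$ as in $\Omega'$. To see this, note that the edges whose membership in the sign set changes are exactly those of $\delta(v)$, and that a cycle of $G$ meets $\delta(v)$ in an even number of edges: if $C$ does not pass through $v$ it meets $\delta(v)$ in none, while if it does pass through $v$ it uses exactly two edges at $v$ (a loop at $v$ is not in $\delta(v)$, so that degenerate case contributes $0$ as well). Hence $\abs{C\cap\Sigma}$ and $\abs{C\cap\Sigma'}$ have the same parity, so $C$ is positive in $\Omega$ if and only if it is positive in $\Omega'$. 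By the circuit characterization quoted above, the circuits of a signed graph are determined by its underlying graph together with the partition of its cycles into positive and negative ones; hence $\Omega$ and $\Omega'$ have exactly the same circuits, and equivalently (by the characterization of forests) the same independent sets of edges. Therefore $M(\Omega)=M(\Omega')$, and in particular $M(\Omega)\cong M(\Omega')$.

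For the linear-algebraic route I would fix an orientation of $\Omega$ with incidence matrix $A$ over $GF(3)$, let $A'$ be obtained from $A$ by multiplying the row indexed by $v$ by $-1$, and check that $A'$ is the incidence matrix of an orientation of $\Omega'$. Columns of edges not incident to $v$ are untouched. For a non-loop edge $e=uv$, scaling the $v$-entry by $-1$ turns a positive-edge column pattern (one $+1$ and one $-1$) into a negative-edge pattern (two entries of equal sign) and conversely, after possibly reorienting $e$; this matches exactly the fact that $e$ flips between $\Sigma$ and $\Sigma'$. Loops at $v$ are not resigned, and one checks directly that their columns (the zero column for a positive loop, a single nonzero entry for a negative loop) remain valid incidence columns for the same loop after the scaling. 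Since multiplying a row by a nonzero scalar is an elementary row operation, $A$ and $A'$ represent the same matroid, which gives $M(\Omega)\cong M(\Omega')$ a second way.

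The argument is essentially bookkeeping, and the only place that needs genuine care is the final verification in the second route: matching the row-scaled matrix $A'$ to an honest incidence matrix of $\Omega'$, in particular reconciling the sign/orientation convention for negative loops at $v$. For that reason I would lead with the combinatorial argument, which never touches orientations, and invoke the matrix computation only to justify the phrase ``row scaling by $-1$'' used in the surrounding text.
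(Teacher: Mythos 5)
Your proposal is correct. The paper does not actually give a formal proof of this lemma: its entire justification is the sentence immediately preceding the statement, ``Resigning edges across a vertex set corresponds to row scaling by $-1$, and hence preserves the corresponding matroid,'' which is precisely your second, linear-algebraic route. Your primary argument is genuinely different: you work directly with the circuit description of $M(\Omega)$ and observe that every cycle of $G$ meets $\delta(v)$ in an even number of edges (zero if it avoids $v$ or is a loop at $v$, two otherwise), so the sign of every cycle --- and hence the entire list of circuits --- is unchanged when $\Sigma$ is replaced by $\Sigma\,\Delta\,\delta(v)$. That route buys a slightly stronger conclusion ($M(\Omega)=M(\Omega')$ as labelled matroids on the same edge set, not merely an isomorphism) and avoids orientation bookkeeping entirely, whereas the row-scaling argument is shorter but, as you correctly flag, requires reconciling the column of a negative loop at $v$ with the stated incidence-matrix convention (an additional column scaling by $-1$, or a reorientation of the loop, fixes this, and neither operation changes the represented matroid). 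Both routes are sound; leading with the combinatorial one and keeping the matrix computation only to justify the phrase ``row scaling by $-1$'' used in the surrounding text is a reasonable --- and more complete --- write-up than the paper's.
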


A pair of vertices $\{u, v \}$ is called a \emph{blocking pair} if there exists a resigning of the edges such that every negative cycle meets $u$ or $v$.

\eat{
A $\Theta$-graph is shown in Fig. \ref{theta_graph}.

\begin{figure}\caption{\label{theta_graph}}\end{figure}

A \emph{biased graph} $\Omega = (G, \Psi)$ consists of a graph $G$ and a collection $\Psi$ of cycles of $G$, called \emph{balanced}, such that if $C_1, C_2 \in \Psi$ and $G[C_1\cup C_2]$ is a $\Theta$-graph, then the third cycle in $G[C_1 \cup C_2]$ is also in $\Psi$.

If every cycle of $G$ is in $\Psi$, then $\Omega$ is \emph{balanced}.

A \emph{signed graph} is obtained from a graph by placing a positive or negative sign on each edge. If $G$ is a signed graph, then one obtains a biased graph $(G, \Psi)$ by taking a cycle of $G$ to be balanced if it contains an even number of negative edges.

A matroid that is isomorphic to the bias matroid of a signed graph, that is, to the bias matroid of such a biased graph, is called a \emph{signed-graphic matroid}.

If $G = (V, E)$ is a graph, and $S \subseteq V$, then $\delta(S) := \{ e = uv \in E: u \in S, v \notin S \}$. \emph{Resigning} in $V$ by $p \in \mathbb{P}^*$ is the process of multiplying each edge in $\delta^+(V)$ (outgoing edge in the cut) with $p$ and multiplying each edge in $\delta^-(V)$ (incoming edge inthe cut) by $p^{-1}$. This does not change the matroid.}


\subsection{Cylinder graph}

\subsubsection{Contractible edges on a cylinder}
Given a \eat{cyclic}graph $G$ embedded on the cylinder, let $C$ be a cycle of $G$. We say $C$ is \emph{contractible} if we can repeatedly contract the edges of $C$ so that $C$ becomes a single vertex, and \emph{noncontractible} otherwise.

\renewcommand\thesubfigure{(\alph{subfigure})}
\begin{figure}[H]
    \centering
    \subfigure[contractible cycle]{\includegraphics[scale=0.14]{cylinder_contractable}}
    \hspace{20pt}
    \subfigure[noncontractible cycle]{\includegraphics[scale=0.14]{cylinder_uncontractable}}
\end{figure}

\subsubsection{Cylinder graph}

We define the \emph{cylinder graph} of a signed graph $\Omega = (G, \Sigma)$ in the following way. Suppose $\Omega$ has two blocking pairs $\{s_1, s_2\}$ and $\{t_1, t_2\}$ such that $\{s_1, s_2, t_1, t_2\}$ form a 4-vertex cut. Because $\{t_1, t_2\}$ is a blocking pair, there exists a resigning of the edges such that every negative cycle meets $t_1$ or $t_2$. We use this resigning to embed $\Omega$ on a cylinder such that all positive cycles are contractible and all the negative cycles are noncontractible.

\begin{figure}[H]
\includegraphics[scale=0.13]{cylindergraph}
\caption{$\Omega$ embedded on a cylinder, where the edges only cross inside the gray area. The cylinder has been cut open along the arrows.}
\end{figure}

If we split each of the blocking pair vertices into two (i.e., $s_i$ into $s_i$ and $s_i'$, and $t_i$ into $t_i$ and $t_i'$) as shown in the figure below, then $\Omega$ becomes divided into two components (each of which is not necessarily connected), call them $H_1$ and $H_2$.
\begin{figure}[H]
\includegraphics[scale=0.13]{cylindergraph_split}
\caption{$\Omega$ after splitting each of the blocking pair vertices into two.}
\end{figure}

Now consider the original embedding of $\Omega$ on the cylinder, before the blocking pair vertices were split. Suppose there exists a resigning $\Sigma'$ of the edges such that every negative edge is incident with either $t_1$ or $t_2$, and the negative edges are either all contained in $H_1$ or all contained in $H_2$. Then such embedding of the signed graph $(G, \Sigma')$ on the cylinder is called a \emph{cylinder graph on $\{s_1, s_2 ; t_1, t_2 \}$}.

We note that our definition of cylinder graphs is slightly different from the definition in \cite{vZ2}; for example, our definition does not require planarity.


\section{Generating dyadic matroids}\label{Section:alg_matroid}

To generate all non-isomorphic 3-connected uniquely $\mathbb{D}$-representable matroids of size up to $k$ that could be potentially signed-graphic, our algorithm incrementally computes the simple dyadic extensions and cosimple dyadic coextensions of each $M \in \{F_7^-, (F_7^-)^* \}$. More explicitly:

\vphantom{x}
\begin{center}
\begin{mdframed}
\begin{tt}
\noindent{Generate-Matroids \{}\\
\hspace*{0.2in}$\mathcal{M} \leftarrow \{ F_7^-, (F_7^-)^* \}$\\
\hspace*{0.2in}for $i$ from 1 to $(k-7)$: \\
\hspace*{0.4in} for each $M$ in $\mathcal{M}$: \{\\
\hspace*{0.6in} $\mathcal{N}\leftarrow$ all simple dyadic single-element extensions and cosimple dyadic\\
\hspace*{1.01in} single-element coextensions of $M$\\
\hspace*{0.6in} for each $N \in \mathcal{N}$:\\
\hspace*{0.8in} Add $N$ to $\mathcal{M}$ if it is not isomorphic to any previously added matroid.\\
\hspace*{0.6in}\}\\
\hspace*{0.2in}return $\mathcal{M}$\\
\}
\end{tt}
\end{mdframed}
\end{center}
\vphantom{x}

In the following sections, we explain the correctness of our algorithm and why these matroids are desirable for our purposes.


\subsection{Unique $\mathbb{D}$-representation}
Since we are interested in the row operations and column scalings required to get from one signed-graphic representation to another of the same matroid, we would like to generate only the matroids that have unique dyadic representations.

By \cite[6.3.4]{vZ}, $P_8$, $F_7^-$, and $(F_7^-)^*$ are uniquely $\mathbb{D}$-representable; and moreover, they are a stabilizer for $\mathbb{D}$. So the simple extensions and cosimple co-extensions over $\mathbb{D}$ of $P_8$, $F_7^-$, and $(F_7^-)^*$ are uniquely $\mathbb{D}$-representable, by the following proposition (\cite[14.8.2]{oxley}):
\begin{prop}Let $\mathbb{F}$ be a field and $N$ be an $\mathbb{F}$-stabilizer for the class $\mathcal{M}$ of $\mathbb{F}$-representable matroids. If $N$ is uniquely representable over $\mathbb{F}$, then so is every 3-connected matroid in $\mathcal{M}$ that has an $N$-minor.\end{prop}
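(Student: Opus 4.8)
\emph{Proof proposal.} The plan is to prove the statement by induction on $|E(M)|$, peeling off one element at a time with the Splitter Theorem while staying inside the family of $3$-connected matroids in $\mathcal{M}$ that have an $N$-minor, and applying the defining property of a stabilizer at each step. Throughout, ``equivalent'' means equivalent as $\mathbb{F}$-representations, that is, up to elementary row operations, column scalings, and field automorphisms (this is the notion implicit in ``uniquely representable''). Recall that $N$ being an $\mathbb{F}$-stabilizer for $\mathcal{M}$ means, in the form convenient here, that whenever $M'$ is a $3$-connected member of $\mathcal{M}$ which is a single-element extension or coextension of a $3$-connected matroid $M_0$ having an $N$-minor, every $\mathbb{F}$-representation of $M_0$ extends to at most one $\mathbb{F}$-representation of $M'$ up to equivalence.

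For the base case, if $|E(M)| = |E(N)|$ then $M$, being $3$-connected with $N$ as a minor, equals $N$ up to isomorphism, and $N$ is uniquely $\mathbb{F}$-representable by hypothesis. For the inductive step assume $|E(M)| > |E(N)|$. Since $N$ is $3$-connected (a standing assumption for stabilizers), the Splitter Theorem produces a $3$-connected matroid $M_0$ with $N$ a minor of $M_0$, $M_0$ a proper minor of $M$, $|E(M_0)| = |E(M)| - 1$, and $M$ a single-element extension or coextension of $M_0$. (If $N$ is itself a wheel or a whirl the Splitter Theorem has its familiar exceptions; these must be dispatched separately — wheels are graphic and hence uniquely $\mathbb{F}$-representable, and the remaining cases can be checked directly — but they do not arise for $N \in \{P_8, F_7^-, (F_7^-)^*\}$, which is all the paper needs.) As a minor of the $\mathbb{F}$-representable matroid $M$, the matroid $M_0$ lies in $\mathcal{M}$; it is $3$-connected, has an $N$-minor, and is smaller than $M$, so by the induction hypothesis $M_0$ is uniquely $\mathbb{F}$-representable.

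Now let $A_1$ and $A_2$ be $\mathbb{F}$-representations of $M$. Pivoting each onto a basis of $M$ adapted to the chosen $N$-minor (equivalently, to $M_0$) and deleting the rows and columns not present in $M_0$, we obtain from $A_1$ and $A_2$ two $\mathbb{F}$-representations of $M_0$, which by the previous paragraph are equivalent. Lifting this equivalence — row operations and column scalings of the $M_0$-part extend to row operations and column scalings of the whole matrix, and the single extra column (or row) may be normalized freely — we may replace $A_2$ by an equivalent matrix so that $A_1$ and $A_2$ induce the \emph{same} representation of $M_0$. At this point $A_1$ and $A_2$ are two extensions of one fixed representation of the $3$-connected matroid $M_0$, which has an $N$-minor, to the single-element extension or coextension $M \in \mathcal{M}$; the stabilizer property forces $A_1$ and $A_2$ to be equivalent. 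Hence $M$ is uniquely $\mathbb{F}$-representable, completing the induction.

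The main obstacle is the technical ``restrict, normalize, lift'' step: one must verify carefully that an equivalence between the induced $M_0$-representations can genuinely be realized by operations on the $M$-representations that fix a common normal form of the $M_0$-part, so that the stabilizer hypothesis — which speaks about extending one fixed representation of $M_0$ — really applies. The secondary obstacle is the bookkeeping for the exceptional wheel and whirl cases of the Splitter Theorem, which requires knowing their representations independently; this is routine and, in any event, irrelevant to the matroids used in this paper.
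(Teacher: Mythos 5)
The paper does not actually prove this proposition: it is quoted as a known result (attributed to \cite[14.8.2]{oxley}, ultimately due to Whittle), so there is no in-paper argument to compare yours against. Judged on its own, your strategy can be made to work but is more elaborate than the statement requires, and the step you yourself flag as ``the main obstacle'' is the entire content of the proof and is not carried out. The standard definition of an $\mathbb{F}$-stabilizer already quantifies over \emph{every} 3-connected member of $\mathcal{M}$ with an $N$-minor: a representation of the $N$-minor of such an $M$ extends to at most one representation of $M$ up to equivalence. With that definition the proposition is a one-step argument --- take two representations of $M$, restrict both to a fixed $N$-minor, use unique representability of $N$ to make the two induced representations equal after applying an equivalence to one of them, and invoke the stabilizer property once --- so no induction, no Splitter Theorem, and no wheel/whirl bookkeeping is needed. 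Your induction instead rests on a reformulated ``single-element'' version of the stabilizer property, stated as if it were the definition; that version is derivable (two representations of $M'$ inducing the same representation of $M_0$ also induce the same representation of the $N$-minor sitting inside $M_0$), but the derivation belongs in the proof, and as written the argument risks circularity with Whittle's Stabilizer Theorem, whose proof is exactly this kind of Splitter-Theorem induction.

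The genuine gap, in either your version or the short one, is the ``restrict, normalize, lift'' step. When $M$ is a single-element \emph{extension} of $M_0$ the lift is harmless: the induced $M_0$-representation is literally a column-submatrix, so row operations and column scalings extend verbatim and the extra column is unaffected. But when $M$ is a \emph{coextension}, inducing the $M_0$-representation requires contracting the new element, i.e.\ pivoting on its column and deleting a row; an equivalence between the induced representations must then be pulled back through that pivot, and one must check that this can be realized by legitimate operations on the full matrices which leave the normalized $M_0$-part fixed, so that both matrices really are extensions of one \emph{fixed} representation of $M_0$ before the stabilizer hypothesis is applied. This is true and standard, but asserting that the extra row or column ``may be normalized freely'' does not discharge it; until that lemma is proved or cited, the stabilizer property has not actually been brought to bear.
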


However, we are not interested in the extensions and coextensions of $P_8$ because matroids with a $P_8$ minor are not signed-graphic. 


\subsection{3-connectedness}
Since every 2-connected matroid can be written in terms of 2-sums of 3-connected matroids (\cite[Cunningham and Edmonds 1980, Seymour 1981b]{oxley}), it is natural to investigate only the 3-connected matroids.

\begin{lem}\label{deletion} If $M \backslash e$ is 3-connected and $M$ is not 3-connected, then $e$ must be a parallel pair or a loop.\end{lem}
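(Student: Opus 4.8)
\emph{Proof proposal.} The plan is to extract a separation of order at most $2$ from the failure of $3$-connectivity of $M$, transfer it to $M\backslash e$, and then use that $M\backslash e$ is $3$-connected to force the transferred separation to collapse onto a two-element set containing $e$. I would use without comment the standard facts from \cite{oxley} that a connected matroid on at least two elements is loopless and coloopless, and that a $3$-connected matroid has no $1$- or $2$-separation; applied to $M\backslash e$ these say that $M\backslash e$ is loopless, coloopless, and has no $1$- or $2$-separation (the degenerate case $|E(M\backslash e)|\le 1$ being trivial).

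First I would fix a $k$-separation $(X,Y)$ of $M$ with $k\in\{1,2\}$, relabelled so that $e\in X$. If $e$ is a loop of $M$ we are done; if $e$ is a coloop of $M$ then $M=(M\backslash e)\oplus U_{1,1}$ is indeed not $3$-connected, so this case is consistent with the hypotheses and the conclusion should really read ``$e$ is a loop, a coloop, or parallel to another element of $M$'' (with $M=(M\backslash e)\oplus U_{1,1}$ as the extra example). In the remaining cases $e$ is neither a loop nor a coloop, so $r(M\backslash e)=r(M)$.

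Next, passing to the partition $(X\setminus e,\,Y)$ of $E(M\backslash e)$ and using that deletion preserves the rank of subsets of the remaining ground set,
\[\lambda_{M\backslash e}(X\setminus e)=r_M(X\setminus e)+r_M(Y)-r(M)\le r_M(X)+r_M(Y)-r(M)=\lambda_M(X)\le 1.\]
Since $M\backslash e$ has no $1$- or $2$-separation, we cannot have both $|X\setminus e|\ge 2$ and $|Y|\ge 2$. I would dispose of $|Y|\le 1$ at once: $Y=\{g\}$ would make $(X,\{g\})$ a $1$-separation of $M$, forcing $g$ to be a loop or a coloop of $M$ and hence of $M\backslash e$ --- impossible. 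So $|Y|\ge 2$, whence $|X\setminus e|\le 1$; and $|X|=1$ would make $(\{e\},Y)$ a $1$-separation of $M$, forcing $e$ to be a loop or a coloop, contrary to assumption. Therefore $X=\{e,f\}$ for a single element $f$.

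Finally I would analyse the separator $\{e,f\}$. Neither $e$ nor $f$ is a loop of $M$ (indeed $f$ is not a loop of $M\backslash e$), so $r_M(\{e,f\})\in\{1,2\}$. If $r_M(\{e,f\})=1$ then $\{e,f\}$ is a parallel pair and we are done. If $r_M(\{e,f\})=2$ then $\lambda_M(\{e,f\})\le 1$ forces $r_M(E\setminus\{e,f\})=r(M)-1$, so the dual rank formula gives $r^{*}_{M}(\{e,f\})=|\{e,f\}|-r(M)+r_M(E\setminus\{e,f\})=1$; since neither $e$ nor $f$ is a coloop of $M$ (else it would be a coloop of $M\backslash e$), it follows that $\{e,f\}$ is a parallel pair of $M^{*}$, i.e.\ a two-element cocircuit of $M$. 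But then $\{e,f\}$ is a circuit of $M^{*}$, so contracting $e$ in $M^{*}$ makes $f$ a loop of $M^{*}/e=(M\backslash e)^{*}$, that is, $f$ a coloop of $M\backslash e$ --- contradicting colooplessness of $M\backslash e$. Hence $r_M(\{e,f\})=2$ cannot occur, and $e$ lies in a parallel pair. The main obstacle is the third step: forcing the transferred separation down to a two-element set requires both the bookkeeping of $r(M\backslash e)$ against $r(M)$ (handled by peeling off the loop and coloop cases first) and the elimination of $|Y|=1$, and it is there --- together with the exclusion of the series-pair alternative just above --- that the full strength of ``$M\backslash e$ is $3$-connected'', rather than merely connected, is used.
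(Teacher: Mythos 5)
Your proposal is correct and follows essentially the same route as the paper's proof: restrict the low-order separation $(X,Y)$ of $M$ to $M\backslash e$, use the $3$-connectivity of $M\backslash e$ to force the side containing $e$ down to a two-element set $\{e,f\}$, and then exclude the series-pair alternative because it would make $f$ a coloop of $M\backslash e$. You are, however, more careful than the paper in two respects, and both points are substantive. First, the paper begins ``let $(A,B)$ be a $2$-separation of $M$,'' which silently assumes $M$ is $2$-connected; you correctly also treat the case where $M$ fails $3$-connectivity only via a $1$-separation with a singleton side, disposing of it by observing that the singleton would be a loop or coloop surviving into $M\backslash e$. Second, your observation that the lemma as stated omits the coloop case is a genuine correction: if $e$ is a coloop then $M=(M\backslash e)\oplus U_{1,1}$ satisfies the hypotheses but $e$ is neither a loop nor in a parallel pair, so the conclusion should read ``$e$ is a loop, a coloop, or parallel to another element.'' (This does not harm the paper's application, since the algorithm there also avoids adding coloops, but the paper's proof never addresses the case and so is incomplete as written.) Your rank computation for transferring the separation, using $r(M\backslash e)=r(M)$ once the coloop case is peeled off, is also cleaner than the paper's unjustified assertion that $r(B\backslash e)-r(A\cup B\backslash e)$ ``stays the same or decreases by $1$.''
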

\begin{proof}Let $(A, B)$ be a 2-separation of $M$ with $e \in B$. Then by definition of 2-separation,
	\[ r(A) + r(B) - r(A \cup B) \leq 1 \]
	\[ |A|, |B| \geq 2. \]
If we delete $e$, then $r(B\backslash e) - r(A \cup B\backslash e)$ will either stay the same or decrease by 1, and so
	\[ r(A)+r(B\backslash e) - r(A \cup B\backslash e) \leq 1.\]
But because $M \backslash e$ is 3-connected, $(A, B\backslash e)$ cannot be a 2-separation of $M \backslash e$. So $|B| = 2$, i.e., $B = \{e, f \}$ for some element $f$. If $\{ e, f \}$ is a series pair, then $M \backslash e$ will have 1-separation, contradicting the assumption that $M\backslash e$ is 3-connected. Therefore, $e$ must be a parallel pair or a loop.\end{proof}

\begin{lem}If $M / e$ is 3-connected and $M$ is not 3-connected, then $e$ must be a series pair or a coloop.\end{lem}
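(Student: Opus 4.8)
The plan is to obtain this lemma from the previous one (Lemma~\ref{deletion}) by matroid duality, rather than redoing the rank computation from scratch. The key facts I would invoke are: a matroid is 3-connected if and only if its dual is 3-connected; contraction and deletion are exchanged by duality, i.e. $M/e = (M^* \backslash e)^*$; and an element $e$ is a coloop (resp.\ lies in a series pair) of $M$ exactly when $e$ is a loop (resp.\ lies in a parallel pair) of $M^*$.

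Concretely, first I would translate the hypotheses to the dual side. Since $M/e$ is 3-connected, so is $(M/e)^* = M^* \backslash e$; and since $M$ is not 3-connected, neither is $M^*$. Thus $M^*$ and the element $e$ satisfy the hypotheses of Lemma~\ref{deletion}, so $e$ is either a loop of $M^*$ or lies in a parallel pair of $M^*$. Second, I would translate this conclusion back: a loop of $M^*$ is a coloop of $M$, and a parallel pair of $M^*$ is a series pair of $M$. This gives exactly the desired conclusion, so the proof is essentially one line once the dictionary is spelled out.

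Alternatively, for readers who prefer a self-contained argument, one can mirror the proof of Lemma~\ref{deletion}: take a 2-separation $(A,B)$ of $M$ with $e \in B$, note that contracting $e$ changes $r(A\cup B) - r(A)$ and $r(A \cup B) - r(B)$ each by $0$ or $1$, deduce that $(A, B\backslash e)$ would remain a 2-separation of $M/e$ unless $|B| = 2$, and then rule out the case that $B = \{e,f\}$ is a parallel pair (which would force a cocircuit of size $2$ in $M/e$ contradicting 3-connectedness), leaving only that $\{e,f\}$ is a series pair or $e$ is a coloop. I do not expect any genuine obstacle here: the only point requiring a little care is making sure the degenerate small-rank or low-cardinality cases (loops, coloops, matroids on few elements) are handled so that "3-connected" is used in the same convention as in Lemma~\ref{deletion}; the duality route sidesteps even that by inheriting the convention automatically.
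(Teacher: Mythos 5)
Your main argument is exactly the paper's proof: the paper also dualizes, noting that 3-connectedness is self-dual and that contraction in $M$ corresponds to deletion in $M^*$, and then applies Lemma~\ref{deletion} to $M^*$. Your write-up just spells out the dictionary (loop of $M^*$ is a coloop of $M$, parallel pair of $M^*$ is a series pair of $M$) more explicitly, which is fine.
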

\begin{proof} Since 3-connectedness is closed under duality, and contraction in $M$ corresponds to deletion in $M^*$, applying Lemma \ref{deletion} to $M^*$ concludes the proof.
\end{proof}
Since the algorithm computes simple extensions and cosimple coextensions of $F_7^-$ and $(F_7^-)^*$, the algorithm never adds any parallel pairs, loops, series pairs, or coloops to a 3-connected matroid. So it follows from the above lemmas that the algorithm produces 3-connected matroids.

More generally, consider the following important result in matroid theory (Seymour 1980b):
\begin{thm}[Splitter Theorem] Let $N$ be a non-empty, connected, simple, cosimple minor of a 3-connected matroid $M$. Suppose that $N$ is neither a wheel nor a whirl. Then either $M = N$, or $M$ has a connected, simple, cosimple minor $M_1$ such that some single-element deltion or some single-element contraction of $M_1$ is isomorphic to $N$. Moreover, if $N$ is 3-connected, so too is $M_1$.
\end{thm}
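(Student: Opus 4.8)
The statement is Seymour's Splitter Theorem, and the plan is the standard one: reduce it, by an induction that tracks the $N$-minor, to Tutte's Wheels-and-Whirls Theorem. It is convenient to first isolate the one-step version: if $M$ is 3-connected with a 3-connected proper minor $N$ that is neither a wheel nor a whirl, then there is an element $e\in E(M)$ for which $M\backslash e$ or $M/e$ is 3-connected and still has an $N$-minor. Granting this, the theorem follows by induction on $|E(M)|-|E(N)|$: when $|E(M)|=|E(N)|+1$ we may take $M_1=M$, which is 3-connected hence connected, simple and cosimple, and some single-element deletion or contraction of $M$ is isomorphic to $N$; when $|E(M)|\ge|E(N)|+2$ we apply the one-step version once to get a smaller 3-connected matroid $M'\neq N$ with an $N$-minor and then invoke the inductive hypothesis on $M'$. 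The weaker hypothesis that $N$ is merely connected, simple and cosimple is treated by the same induction, but now the one-step reduction uses Lemma~\ref{deletion} and its dual --- which control how a single deletion or contraction can create low connectivity --- in place of Tutte's theorem; the ``moreover'' clause is precisely the 3-connected case of the one-step version.

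For the one-step version I would argue by contradiction, choosing a counterexample $M$ with $|E(M)|$ minimum. Since $N$ is a proper minor, some single-element deletion or contraction of $M$ has an $N$-minor; the difficulty is that this element need not preserve 3-connectivity, while the element Tutte's theorem provides need not preserve the $N$-minor. The two workhorses are: (i) Tutte's Wheels-and-Whirls Theorem, which says a 3-connected matroid with no element $e$ for which $M\backslash e$ or $M/e$ is 3-connected must be a wheel or a whirl --- and $M$ cannot be one, since every 3-connected minor of a wheel is a wheel and every 3-connected minor of a whirl is a wheel or whirl, whereas $N$ is neither; and (ii) Bixby's Lemma, which guarantees that for each $e$ at least one of $\operatorname{co}(M\backslash e)$ and $\operatorname{si}(M/e)$ is 3-connected. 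Writing $N=M\backslash X/Y$ appropriately, one tries to remove an element of $X$ by deletion or of $Y$ by contraction while keeping 3-connectivity; if no such removal keeps 3-connectivity, one analyzes the fans and small separations created at such an $e$, uses Bixby's Lemma to follow the connectivity, and derives either a still-smaller counterexample (contradicting minimality) or that $M$ is a wheel or whirl (contradicting the minor fact above).

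The main obstacle --- and the reason the complete proof runs to several pages in the literature --- is exactly this reconciliation of ``keep the $N$-minor'' with ``keep 3-connectivity.'' Carrying it out requires the detailed structural analysis of the $2$- and $3$-separations produced by single-element removals in a 3-connected matroid: classifying the fans hanging off an element, tracking which separations survive a deletion versus a contraction, and showing that a configuration in which every $N$-preserving removal breaks 3-connectivity forces $M$, or an intermediate minor, to be a wheel or whirl. I would follow Seymour's original case analysis, equivalently the Bixby-Lemma-based treatment in Oxley's book, rather than look for a shortcut, since the bookkeeping of which separations persist under which removals is where the real content of the theorem resides.
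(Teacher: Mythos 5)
The paper does not prove this statement: it is imported verbatim from the literature (Seymour 1980b; cf.\ Oxley, Ch.\ 12), so there is no in-paper argument to compare yours against. Your outline is the standard route to the Splitter Theorem --- reduce to a one-step lemma, prove that lemma by taking a minimal counterexample and playing Tutte's Wheels-and-Whirls Theorem against Bixby's Lemma, then chain the one-step version by induction on $|E(M)|-|E(N)|$ --- and the top-level induction you describe is correct and does yield the $M_1$ of the statement. But as a proof it has a genuine gap, which you yourself name: the one-step lemma \emph{is} the theorem, and you do not prove it. Everything after ``I would argue by contradiction'' is a description of what a proof would have to do (analyze the fans and $2$- and $3$-separations created by each $N$-preserving removal, and show that if every such removal destroys $3$-connectivity then $M$ is a wheel or whirl), followed by an explicit deferral to Seymour's case analysis. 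Until that analysis is carried out, nothing has been established beyond the reduction.

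A second, smaller gap: your treatment of the weaker hypothesis --- $N$ merely connected, simple, and cosimple rather than $3$-connected --- is asserted to go through ``by the same induction'' with Lemma~\ref{deletion} and its dual substituted for Tutte's theorem. That substitution does not obviously work. Lemma~\ref{deletion} tells you what an element must look like when its deletion restores $3$-connectivity; it does not produce an element whose removal preserves both connectivity-type hypotheses on the intermediate minor \emph{and} the $N$-minor, which is exactly the difficulty in the $3$-connected case and does not disappear in the weaker one. Seymour's original paper handles the non-$3$-connected $N$ with a separate argument, and if you want the statement as quoted (with $M_1$ only connected, simple, and cosimple in general, and $3$-connected when $N$ is), you need to either reproduce that argument or restrict to the $3$-connected version, which is all the paper actually uses.
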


Wheels and whirls are an exception to the Splitter theorem, but this is not a problem in our case since there are no wheels or whirls in matroids with $F_7^-$, $(F_7^-)^*$, or $P_8$ as a minor.

\begin{cor}Let $M$ and $N$ be 3-connected matroids such that $N$ is a minor of $M$ with at least four elements, and $N$ is neither a wheel nor a whirl. Then there is a sequence $M_0, M_1, \ldots, M_n$ of 3-connected matroids with $M_0 \cong N$ and $M_n = M$ such that $M_i$ is a single-element deletion or a single-element contraction of $M_{i+1}$ for all $i \in \{0, 1, \ldots, n-1 \}$. \end{cor}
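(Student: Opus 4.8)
The plan is to prove the corollary by induction on $|E(M)| - |E(N)|$, with the Splitter Theorem as the engine. First I would record the routine fact that a 3-connected matroid with at least four elements is automatically connected, simple, and cosimple; thus both $M$ and $N$ satisfy all the hypotheses imposed on $N$ in the Splitter Theorem. For the base case $|E(M)| = |E(N)|$, the matroid $N$, being a minor of $M$ of the same size, must equal $M$, so the one-term sequence $M_0 = M \cong N$ (i.e.\ $n = 0$) works.

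For the inductive step, suppose $|E(M)| > |E(N)|$, so $M \ne N$. Applying the Splitter Theorem to the pair $(M, N)$ produces a connected, simple, cosimple minor $M'$ of $M$ --- which is 3-connected because $N$ is --- together with an element $e \in E(M')$ such that $M' \backslash e \cong N$ or $M' / e \cong N$. In particular $|E(M')| = |E(N)| + 1$, so $|E(M')| \ge 5$ and $|E(M)| - |E(M')| < |E(M)| - |E(N)|$. Assuming $M'$ is neither a wheel nor a whirl, the induction hypothesis applies to $(M, M')$ and yields a sequence $M' = L_0, L_1, \dots, L_m = M$ of 3-connected matroids in which each $L_i$ is a single-element deletion or contraction of $L_{i+1}$. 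Prepending $M' \backslash e$ (resp.\ $M' / e$) --- which is isomorphic to $N$ and is obtained from $M' = L_0$ by a single deletion (resp.\ contraction) --- gives the required chain from a copy of $N$ up to $M$, completing the induction.

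\textbf{The main obstacle} is the clause "assuming $M'$ is neither a wheel nor a whirl": the Splitter Theorem draws no conclusion when its target matroid is a wheel or a whirl, so this must be ruled out for the intermediate matroid $M'$ at every stage of the induction. This is exactly the subtlety flagged in the remark preceding the statement. In the setting in which the corollary is invoked, $N$ has $F_7^-$, $(F_7^-)^*$, or $P_8$ as a minor; since $M'$ has an $N$-minor, $M'$ also has one of these as a minor, and no wheel or whirl does, so the hypothesis of the induction hypothesis is satisfied throughout and the argument closes. (In full generality one would instead invoke the version of the Splitter Theorem that permits the wheel/whirl case under the extra assumption that $M$ has no strictly larger wheel, respectively whirl, minor; that refinement is not needed for the present applications.)
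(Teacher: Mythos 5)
The paper does not actually prove this corollary: it is stated immediately after the Splitter Theorem as a known consequence (it is essentially Corollary 12.2.1 of Oxley's book), with the wheel/whirl issue addressed only by the informal remark that matroids with an $F_7^-$, $(F_7^-)^*$, or $P_8$ minor are never wheels or whirls. Your induction on $|E(M)|-|E(N)|$ is the standard derivation and is correct in outline: the Splitter Theorem supplies a 3-connected minor $M'$ of $M$ with a single-element deletion or contraction isomorphic to $N$, and the chain is assembled by applying the induction hypothesis to the pair $(M,M')$. You also correctly isolate the one genuine subtlety, namely that the intermediate matroid $M'$ must itself be neither a wheel nor a whirl for the induction hypothesis to apply, and your resolution via the ambient application (every intermediate matroid has an $F_7^-$, $(F_7^-)^*$, or $P_8$ minor) matches the paper's own remark. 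One small improvement worth noting: the corollary is in fact true exactly as stated, without appealing to the application, because every 3-connected minor with at least four elements of a wheel is a wheel and of a whirl is a whirl; since $M'$ has the non-wheel, non-whirl matroid $N$ as a minor, $M'$ can never be a wheel or a whirl, so the induction closes unconditionally. Only trivial bookkeeping remains (the induction hypothesis yields $L_0\cong M'$ rather than $L_0=M'$, so the prepended matroid should be the corresponding deletion or contraction of $L_0$ itself), which does not affect correctness.
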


Starting with $M_0 \in \{ F_7^-, (F_7^-)^* \}$, our algorithm generates a set containing all sequences $M_0, M_1, \ldots, M_n$ of 3-connected matroids such that $M_{j+1}$ is a single-element extension or coextension of $M_{j}$.

\eat{All dyadic matroids that are not near-regular have non-Fano, nonFano dual, or $P_8$ as a minor.}

\section{Generating signed-graphic representations}\label{Section:alg_reps} To generate all signed-graphic representations of a given dyadic matroid, we implemented Prof. Rudi Pendavingh's recursive function {\tt graphic\_rec} (originally written in C++) in Sage. The following nonrecursive pseudocode provides a simplified overview of the algorithmic approach, although the algorithm performs some clever optimizations to improve the running time.

\vphantom{x}
\begin{mdframed}
\begin{tt}
\noindent{graphic\_rec (input: dyadic matroid $M$)} \{ \\
\hspace*{0.2in}$\mathcal{A} \leftarrow \emptyset$\\
\hspace*{0.2in}$R \leftarrow$ matrix representation of $M$\\
\hspace*{0.2in}$X \leftarrow$ all dyadic extension columns of $M$ \\
\hspace*{0.2in}for each full-rank $r$-subset $S \subseteq X$: \{\\
\hspace*{0.4in} Row-reduce $[R \mid S] \rightarrow [R' \mid I]$ \\
\hspace*{0.4in} if $R'$ has $\leq 2$ nonzeroes per column:\\
    \hspace*{0.6in}Add $R'$ to $\mathcal{A}$\\
\hspace*{0.2in}\}\\
\hspace*{0.2in}Return $\mathcal{A}$\\
\}
\end{tt}
\end{mdframed}
\vphantom{x}

If $X$ is the set of all dyadic extension columns of a rank-$r$ matroid $M$, then the columns of $X$ correspond to a finite number of points in the ambient space. The algorithm tries to find an $r$-subset of these points $\vec{x}_1, \vec{x}_2, \ldots, \vec{x}_r \in X$ such that $\{ \vec{x}_1, \vec{x}_2, \ldots, \vec{x}_r\}$ form a basis and every original element of the matroid is spanned by at most two elements from this basis. If such a basis exists, then the matroid can be represented as $[I\mid A]$, where the columns of $A$ correspond to the original elements of the matroid, and each column of $A$ contains at most 2 nonzero entries. Then $A$ is a signed-graphic representation of $M$.

In each call to the recursive function, the algorithm grabs an appropriate new element $\vec{x}$ from $X$ and turns it into a basis element, then checks if the current (incomplete) set of basis elements can be extended into a basis with the desired properties. Essentially, the algorithm adds one edge at a time to grow a spanning forest, similar to Prim's greedy algorithm for growing a minimum spanning tree of a connected graph.

\subsection{Initial call to the recursive function}
Given a rank-$r$ dyadic matroid $M$ on $n$ elements, let $R$ be a dyadic representation of $M$, and $X$ the set of all dyadic extension columns of $M$. The algorithm calls the recursive function {\tt graphic\_rec} (described in the following section) with the initial parameters $k=1$, $R$, and $X$.

\subsection{The recursive function}

The {\tt graphic\_rec} function inputs an integer $k \geq 1$, and two matrices $R$ and $X$ of the form

\begin{footnotesize}
\begin{align*}
R = \begin{blockarray}{ccc}
 \begin{block}{c(c|c)}
     &  \phantom{xxxxxx} & \phantom{xxxxxxxxxx}        \\  
     & I_{k-1} & \\
     &&\\
     \cline{2-2}
    &&\mathop{\vphantom{\int}}^{\textstyle A}  \\  
     &0 & \\  
     &        &        \\  
 \end{block}
\end{blockarray}
\qquad\qquad\quad
X = \begin{blockarray}{ccc}
 \begin{block}{c(c|c)}
     & \phantom{xxxxxx}  &  \\  
     & I_{k-1} & \Big{*}
     &        &        \\
     &        &        \\\cline{2-4}
    &        &        \\  
     & 0 & \textrm{\scriptsize $\geq 1$ nonzeroes per column}\\  
     &        &        \\  
 \end{block}
\end{blockarray}
\end{align*}
\end{footnotesize}

where the submatrix $A$ satisfies the following property for $N=k-1$:
\begin{property}\label{condition}
If a column has more than $2$ nonzero entries in the first $N$ entries, then $\exists \ell \geq N+1$ such that the $\ell$th entry of that column is nonzero.
\end{property}

\eat{\begin{equation}\label{condition}
\textrm{if there exists a column index $j$ such that } \sum\limits_{i=1}^{k-1} |A_{ij}| > 2, \quad\textrm{then $\exists \ell \geq k$ such that $A_{\ell j} \neq 0$.}
\end{equation}}

We require this condition on $R$ for the following reason: If a column of $A$ has more than 2 nonzero entries in the already processed rows, then a future pivot is needed to repair that column so that it has $\leq 2$ nonzero entries. If the property is not satisfied, then the column cannot be repaired and hence $R$ cannot be signed-graphic.

The first $k-1$ columns of $R$ and of $X$ correspond to the incomplete set of basis elements that the algorithm has already found prior to the current recursive call. Furthermore, this incomplete set of basis elements corresponds to a forest $F$, which we would like to grow into a spanning forest. In choosing the next element to add to the matroid, the algorithm considers three cases: it can grow an existing partial component of $F$ into a larger one, add a negative loop, or declare the existing components to be done and start a new component.


\subsubsection{Growing an existing partial component} \label{subsubsection:growing_existing_component}

Suppose there exists a column $\vec{x} \in X$ such that it has 1 nonzero entry among the first $k$ rows, and exactly one nonzero entry among the remaining $r-k$ rows. Then we may try to grow an existing component of the forest $F$. Let $t < k$ and $s \geq k$ be the unique indices such that $b:=\vec{x}[t]$ and $a:=\vec{x}[s]$ are nonzero.\eat{, i.e.,
\[ x = 
\begin{blockarray}{cc}
\begin{block}{c[c]}
 &   \\
t& b \\
 &  \\
 k&  \\
 s & a  \\
 &  \\
\end{block}
\end{blockarray}
\]}

If $R' := R \cup \{\vec{x}\}$ is obtained by inserting $\vec{x}$ between the $(k-1)$th and $k$th columns in $R$, then
\begin{footnotesize}
\[
R' = \begin{blockarray}{cccc}
&&k\\
 \begin{block}{c(c|c|c)}
     & \phantom{xxxxxx}& &\phantom{xxxxxxxxxx} \\  
     & I_{k-1}& &
     &        &&        \\
     &        &&        \\\cline{2-2}
    &        &\mathop{\vphantom{\int}}^{\displaystyle\vec{x}}&\mathop{\vphantom{\int}}^{\textstyle A} \\  
     &0 &&\\  
     &        &&        \\  
 \end{block}
\end{blockarray}
\;=\begin{blockarray}{cccc}
&&k\\
 \begin{block}{c(c|c|c)}
     & \phantom{xxxxxx}& & \phantom{xxxxxxxxxx}\\
     \cline{2-4}  
     t& \vec{e}_t & b & \vec{y}\\
     \cline{2-4}
     &        &&        \\
    k&        &&        \\  
     \cline{2-4}
     s&0\cdots0 &a& \vec{z}\\  
     \cline{2-4}
     &        &&        \\  
 \end{block}
\end{blockarray}
\]
\end{footnotesize}

where $\vec{y}$ (resp. $\vec{z}$) is the $t$th (resp. $s$th) row of $A$.  To turn $\vec{x}$ into a basis element, the algorithm pivots over $\vec{x}[s]$, then swaps the $k$th row and $s$th row of $R$:
\begin{footnotesize}
\begin{align*}
\begin{array}{ccc}
\begin{blockarray}{cccc}
&&k\\
 \begin{block}{c(c|c|c)}
     & \phantom{xxxxxx}& & \phantom{xxxxxxxxxx}\\
     \cline{2-4}  
     t& \vec{e}_t & b & \vec{y}\\
     \cline{2-4}
     &        &&        \\
    k&        &&        \\  
     \cline{2-4}
     s&0\cdots0 &a& \vec{z}\\  
     \cline{2-4}
     &        &&        \\  
 \end{block}
\end{blockarray}
\quad\rightarrow
&\begin{blockarray}{cccc}
&&k\\
 \begin{block}{c(c|c|c)}
     & \phantom{xxxxxx}& &\phantom{xxxxxxxxxx}\\
     \cline{2-4}  
     t& \vec{e}_t & 0 & \vec{y}- ba^{-1}\vec{z}\\
     \cline{2-4}
     &        &&        \\
    k&        &&        \\  
     \cline{2-4}
     s&0\cdots0 &1& a^{-1}\vec{z}\\  
     \cline{2-4}
     &        &&        \\  
 \end{block}
\end{blockarray}
\quad\rightarrow
&\begin{blockarray}{cccc}
&&k\\
 \begin{block}{c(c|c|c)}
     & \phantom{xxxxxx}& & \phantom{xxxxxxxxxx}\\
     \cline{4-4}  
     t& I_{k-1} & 0 & \vec{y}- ba^{-1}\vec{z}\\
     \cline{4-4}
     &        &&        \\
     \cline{2-4}
     k&0\cdots0 &1& a^{-1}\vec{z}\\  
     \cline{2-4}
     s&&&\\
     &        &&        \\  
 \end{block}
\end{blockarray}
\\
\textrm{(1) } R' & \textrm{(2) Pivot over $\vec{x}[s] = R'_{sk}$} & \textrm{(3) Swap rows $k$ and $s$}
\end{array}\end{align*}
\end{footnotesize}

Now $R'$ is of the form
\begin{footnotesize}
\[
R' = \begin{blockarray}{ccc}
 \begin{block}{c(c|c)}
     &  \phantom{xxxxxxxxxx} & \phantom{xxxxxxxxxx}        \\  
     & I_{k} & \\
     &&\\
     \cline{2-2}
    &&\mathop{\vphantom{\int}}^{\textstyle A'}  \\  
     &0 & \\  
     &        &        \\  
 \end{block}
\end{blockarray}
\]
\end{footnotesize}

The algorithm then checks whether $A'$ satisfies property \ref{condition} for $N=k$. If $A'$ does not satisfy the property, then $R'$ can no longer be turned into a signed-graphic representation, so the algorithm prunes search by returning the current recursive function call.

Otherwise if $k=r$, then property \ref{condition} guarantees that $R' = [I \mid A]$ where every column of $A$ has $\leq 2$ nonzero entries, which implies that the representation is signed-graphic. So the algorithm stores $A$ in $\mathcal{A}$ and returns the current function call.

Otherwise if $k < r$, then the algorithm prepares for another recursive call. Without loss of generality, we may assume that $\vec{x}$ is the $k$th column of $X$, i.e.,

\begin{footnotesize}
\begin{align*}
X = \begin{blockarray}{cccc}
 &&k\\
 \begin{block}{c(c|c|c)}
     & \phantom{xxxxxx}  &  &\\  
     & I_{k-1} & & \Big{*}
     &        & &       \\
     &        &&       \\\cline{2-2}\cline{4-4}
    &        &\mathop{\vphantom{\int}}^{\displaystyle\vec{x}} &        \\  
     & 0 & &\textrm{\scriptsize $\geq 1$ nonzeroes per column}\\  
     &      &  &        \\  
 \end{block}
\end{blockarray}
= \begin{blockarray}{cccc}
&&k\\
 \begin{block}{c(c|c|c)}
     & \phantom{xxxxxx}  &  &\phantom{xxxxxxxxxxxxxxx} \\
     \cline{2-4}
     t& \vec{e}_t & b & \vec{u}\\
     \cline{2-4}
     &        & &       \\
     &        &&       \\
    k&        & &        \\
    \cline{2-4}
     s& 0\cdots0 & a& \vec{w}\\
     \cline{2-4}
     &      &  &        \\  
 \end{block}
\end{blockarray}
\end{align*}
\end{footnotesize}

First, the algorithm pivots $X$ over $\vec{x}[s]$, then swaps the $k$th and $s$th row of $X$:
\begin{footnotesize}
\[
\begin{array}{ccc}

\begin{blockarray}{cccc}
&&k\\
 \begin{block}{c(c|c|c)}
     & \phantom{xxxxxx}& & \phantom{xxxxxxxxxx}\\
     \cline{2-4}  
     t& \vec{e}_t & b & \vec{u}\\
     \cline{2-4}
     &        &&        \\
    k&        &&        \\  
     \cline{2-4}
     s&0\cdots0 &a& \vec{w}\\  
     \cline{2-4}
     &        &&        \\  
 \end{block}
\end{blockarray}
\quad\rightarrow
&\begin{blockarray}{cccc}
&&k\\
 \begin{block}{c(c|c|c)}
     & \phantom{xxxxxx}& &\phantom{xxxxxxxxxx}\\
     \cline{2-4}  
     t& \vec{e}_t & 0 & \vec{u}- ba^{-1}\vec{w}\\
     \cline{2-4}
     &        &&        \\
    k&        &&        \\  
     \cline{2-4}
     s&0\cdots0 &1& a^{-1}\vec{w}\\  
     \cline{2-4}
     &        &&        \\  
 \end{block}
\end{blockarray}
\quad\rightarrow
&\begin{blockarray}{cccc}
&&k\\
 \begin{block}{c(c|c|c)}
     & \phantom{xxxxxx}& & \phantom{xxxxxxxxxx}\\
     \cline{4-4}  
     t& I_{k-1} & 0 & \vec{u}- ba^{-1}\vec{w}\\
     \cline{4-4}
     &        &&        \\
     \cline{2-4}
     k&0\cdots0 &1& a^{-1}\vec{w}\\  
     \cline{2-4}
     s&&&\\
     &        &&        \\  
 \end{block}
\end{blockarray}
\\
\textrm{(1) } X & \textrm{(2) Pivot over $\vec{x}[s]=X_{sk}$} & \textrm{(3) Swap rows $k$ and $s$}
\end{array}\]
\end{footnotesize}

Then the algorithm throws away the columns in $X$ which do not have a nonzero entry below the $k$th row (i.e., the columns which are entirely spanned by $\{e_1, e_2, \ldots, e_k\}$). Finally, the algorithm recursively calls {\tt graphic\_rec} with the parameters $k+1$, $R'$, and $X$.


\subsubsection{Adding a negative loop}
Suppose there exists a column $\vec{x} \in X$ such that its only nonzero entry is on or below the $k$th row. 
Then we may try to add a negative loop to the forest $F$. Let $s \geq k$ be the unique index such that $a:=\vec{x}[s]$ is nonzero.

If $R' := R \cup \{\vec{x}\}$ is obtained by inserting $\vec{x}$ between the $(k-1)$th and $k$th columns in $R$, then
\begin{footnotesize}
\[
R' = \begin{blockarray}{cccc}
&&k\\
 \begin{block}{c(c|c|c)}
     & \phantom{xxxxxx}& &\phantom{xxxxxxxxxx} \\  
     & I_{k-1}& &
     &        &&        \\
     &        &&        \\\cline{2-2}
    &        &\mathop{\vphantom{\int}}^{\displaystyle\vec{x}}&\mathop{\vphantom{\int}}^{\textstyle A} \\  
     &0 &&\\  
     &        &&        \\  
 \end{block}
\end{blockarray}
\;=\begin{blockarray}{cccc}
&&k\\
 \begin{block}{c(c|c|c)}
     & \phantom{xxxxxx}& 0 & \phantom{xxxxxxxxxx}\\
     & I_{k-1} &\vdots  & \\
     &        &0&      \\\cline{2-2}
    k&        & & \\
     \cline{2-4}
     s&0\cdots0 &a& \vec{z}\\  
     \cline{2-4}
     &        &0&        \\  
 \end{block}
\end{blockarray}
\]
\end{footnotesize}

where $\vec{z}$ is the $s$th row of $A$.  To turn $\vec{x}$ into a basis element, the algorithm pivots over $\vec{x}[s]$, then swaps the $k$th row and $s$th row of $R$:

\begin{footnotesize}
\[
\begin{array}{ccc}

\begin{blockarray}{cccc}
&&k\\
 \begin{block}{c(c|c|c)}
     & \phantom{xxxxxx}& 0& \phantom{xxxxxxxxxx}\\
     & I_{k-1} & \vdots & \\
     &        &0&       \\\cline{2-2}
    k&        &&        \\
     \cline{2-4}
     s&0\cdots0 &a& \vec{z}\\  
     \cline{2-4}
     &        &0&        \\  
 \end{block}
\end{blockarray}
\quad\rightarrow
&\begin{blockarray}{cccc}
&&k\\
 \begin{block}{c(c|c|c)}
     & \phantom{xxxxxx}&0& \phantom{xxxxxxxxxx}\\
     & I_{k-1} & \vdots & \\
     &        &0&       \\\cline{2-2}
    k&        &&        \\
     \cline{2-4}
     s&0\cdots0 &1& a^{-1}\vec{z}\\  
     \cline{2-4}
     &        &0&        \\  
 \end{block}
\end{blockarray}
\quad\rightarrow
&\begin{blockarray}{cccc}
&&k\\
 \begin{block}{c(c|c|c)}
     & \phantom{xxxxxx}&0 & \phantom{xxxxxxxxxx}\\
     & I_{k-1} & \vdots & \\
     &        &0&      \\\cline{2-2}
     \cline{2-4}
     k&0\cdots0 &1& a^{-1}\vec{z}\\  
     \cline{2-4}
     s&&\\
     &        &0&        \\  
 \end{block}
\end{blockarray}
\\
\textrm{(1) } R' & \textrm{(2) Pivot over $\vec{x}[s] = R'_{sk}$} & \textrm{(3) Swap rows $k$ and $s$}
\end{array}\]
\end{footnotesize}

Now $R'$ is of the form

\begin{footnotesize}
\[
R' = \begin{blockarray}{ccc}
 \begin{block}{c(c|c)}
     &  \phantom{xxxxxxxxxx} & \phantom{xxxxxxxxxx}        \\  
     & I_{k} & \\
     &&\\
     \cline{2-2}
    &&\mathop{\vphantom{\int}}^{\textstyle A'}  \\  
     &0 & \\  
     &        &        \\  
 \end{block}
\end{blockarray}
\]
\end{footnotesize}

The algorithm then checks whether $A'$ satisfies property \ref{condition} for $N=k$. If $A'$ does not satisfy the property, then $R'$ can no longer be turned into a signed-graphic representation, so the algorithm prunes search by returning the current recursive function call.

Otherwise if $k=r$, then property \ref{condition} guarantees that $R' = [I \mid A]$ where every column of $A$ has $\leq 2$ nonzero entries, which implies that the representation is signed-graphic. So the algorithm stores $A$ in $\mathcal{A}$ and returns the current function call.

Otherwise if $k < r$, then the algorithm prepares for another recursive call. Similar to the previous case, the algorithm pivots $X$ over $\vec{x}[s]$, swaps the $k$th and $s$th row of $X$, then throws away the columns in $X$ which do not have a nonzero entry below the $k$th row. Finally, the algorithm recursively calls {\tt graphic\_rec} with the parameters $k+1$, $R'$, and $X$.


\subsubsection{Starting a new component}
Suppose we declare the existing components to be done, i.e., we have exhausted possibilities for growing already existing components and would like to start a new component. Then the algorithm throws away the columns in $X$ that have a nonzero entry in the first $k-1$ rows (i.e., the columns that connect to already existing components), and arbitrary chooses one of the remaining columns $\vec{x} \in X$ that has a nonzero entry on or below the $k$th row. Let $s\geq k$ be the smallest integer such that $a := \vec{x}[s] \neq 0$.

If $R' := R \cup \{\vec{x}\}$ is obtained by inserting $\vec{x}$ between the $(k-1)$th and $k$th columns in $R$, then
\begin{footnotesize}
\[
R' = \begin{blockarray}{cccc}
&&k\\
 \begin{block}{c(c|c|c)}
     & \phantom{xxxxxx}& &\phantom{xxxxxxxxxx} \\  
     & I_{k-1}& &
     &        &&        \\
     &        &&        \\\cline{2-2}
    &        &\mathop{\vphantom{\int}}^{\displaystyle\vec{x}}&\mathop{\vphantom{\int}}^{\textstyle A} \\  
     &0 &&\\  
     &        &&        \\  
 \end{block}
\end{blockarray}
\;=\begin{blockarray}{cccc}
&&k\\
 \begin{block}{c(c|c|c)}
     & \phantom{xxxxxx}& 0 & \phantom{xxxxxxxxxx}\\
     & I_{k-1} &\vdots  & \\
     &        &0&    \\\cline{2-2}
    k&        & & \\
     \cline{2-4}
     s&0\cdots0 &a& \vec{z}\\  
     \cline{2-4}
     &        &\vec{b}& B      \\  
 \end{block}
\end{blockarray}
\]
\end{footnotesize}

where $\vec{z}$ is the $s$th row of $A$, $B$ is the $(r-s)\times n$ submatrix of $A$ below $\vec{z}$, and $\vec{b}$ is the $(r-s)$-subvector of $\vec{x}$ below $\vec{x}[s]$.  To turn $\vec{x}$ into a basis element, the algorithm pivots over $\vec{x}[s]$, then swaps the $k$th row and $s$th row of $R$:

\begin{footnotesize}
\[
\begin{array}{ccc}

\begin{blockarray}{cccc}
&&k\\
 \begin{block}{c(c|c|c)}
     & \phantom{xxxxxx}& 0 & \phantom{xxxxxxxxxx}\\
     & I_{k-1} &\vdots  & \\
     &        &0&    \\\cline{2-2}
    k&        & & \\
     \cline{2-4}
     s&0\cdots0 &a& \vec{z}\\  
     \cline{2-4}
     &        &\vec{b}& B      \\  
 \end{block}
\end{blockarray}
\quad\rightarrow
&\begin{blockarray}{cccc}
&&k\\
 \begin{block}{c(c|c|c)}
     & \phantom{xxxxxx}&0& \phantom{xxxxxxxxxx}\\
     & I_{k-1} & \vdots & \\
     &        &0&       \\\cline{2-2}
    k&        &&        \\
     \cline{2-4}
     s&0\cdots0 &1& a^{-1}\vec{z}\\  
     \cline{2-4}
     &        &0& B- a^{-1} \vec{b}\vec{z}       \\  
 \end{block}
\end{blockarray}
\quad\rightarrow
&\begin{blockarray}{cccc}
&&k\\
 \begin{block}{c(c|c|c)}
     & \phantom{xxxxxx}&0 & \phantom{xxxxxxxxxx}\\
     & I_{k-1} & \vdots & \\
     &        &0&      \\\cline{2-2}
     \cline{2-4}
     k&0\cdots0 &1& a^{-1}\vec{z}\\  
     \cline{2-4}
     s&&\\
     \cline{4-4}
     &        &0&B-a^{-1}\vec{b}\vec{z}        \\  
 \end{block}
\end{blockarray}
\\
\textrm{(1) } R' & \textrm{(2) Pivot over $\vec{x}[s] = R'_{sk}$} & \textrm{(3) Swap rows $k$ and $s$}
\end{array}\]
\end{footnotesize}

Now $R'$ is of the form

\begin{footnotesize}
\[
R' = \begin{blockarray}{ccc}
 \begin{block}{c(c|c)}
     &  \phantom{xxxxxxxxxx} & \phantom{xxxxxxxxxx}        \\  
     & I_{k} & \\
     &&\\
     \cline{2-2}
    &&\mathop{\vphantom{\int}}^{\textstyle A'}  \\  
     &0 & \\  
     &        &        \\  
 \end{block}
\end{blockarray}
\]
\end{footnotesize}

The algorithm then checks whether $A'$ satisfies property \ref{condition} for $N=k$. If $A'$ does not satisfy the property, then $R'$ can no longer be turned into a signed-graphic representation, so the algorithm prunes search by returning the current recursive function call.

Otherwise if $k=r$, then property \ref{condition} guarantees that $R' = [I \mid A]$ where every column of $A$ has $\leq 2$ nonzero entries, which implies that the representation is signed-graphic. So the algorithm stores $A$ in $\mathcal{A}$ and returns the current function call.

Otherwise if $k < r$, then the algorithm prepares for another recursive call. Similar to the previous cases, the algorithm pivots $X$ over $\vec{x}[s]$, swaps the $k$th and $s$th row of $X$, then throws away the columns in $X$ which do not have a nonzero entry below the $k$th row. Finally, the algorithm recursively calls {\tt graphic\_rec} with the parameters $k+1$, $R'$, and $X$.

\section{Matroid-preserving operations on a signed-graphic representation}\label{Section:Matroid-preserving_operations}

For certain pairs of signed-graphic representations $A$, $B$ of the same matroid, we can get from $A$ to $B$ via a sequence of matroid-preserving operations. One such operation is the resigning of edges across a vertex set (see Lemma \ref{Lem:resigning}). We present another matroid-preserving operation called the \emph{cylinder flip}, which is reminiscent of the Whitney flip \cite[Ch. 5]{pivotto}. In section \ref{subsection:flip_example}, we present an example where two signed-graphic representations of the same matroid are related via a sequence of resigning edges and cylinder flips.

\eat{\subsection{Resigning edges}
By Lemma \ref{Lem:resigning}, resigning edges across a vertex set preserves the corresponding matroid.
\eat{If you want to resign across a cut, you can resign along any cut. Every edge inside the part will be resigned twice. Only the edges with one edge inside cut and the other edge outside cut will be changed.}}

\subsection{Cylinder flip}

\begin{df}\eat{We define the \emph{cylinder flip} operation on a cylinder graph as follows.} For a given cylinder graph $\Omega$ on the blocking pair vertices $\{s_1, s_2; t_1, t_2\}$, the following sequence of operations on $\Omega$ is called a \emph{cylinder flip over $\{s_1, s_2; t_1, t_2\}$}.

 \eat{Let $\{s_1, s_2\}$ and $\{t_1, t_2\}$ each be a pair of blocking vertices of $\Omega$.} 

\renewcommand\thesubfigure{(\arabic{subfigure})}
\begin{figure}[H]
	\centering
    \subfigure[Signed graph $\Omega$]{\includegraphics[scale=0.13]{cylinderflip1-1}}
    \hspace{20pt}
    \subfigure[Split each blocking vertex into two vertices: $s_i$ into $s_i$ and $s_i'$, and $t_i$ into $t_i$ and $t_i'$.]{\label{fig:A1}\includegraphics[scale=0.13]{cylinderflip1-3}}
    \hspace{20pt}
    \subfigure[Flip the center subgraph vertically.]{\includegraphics[scale=0.13]{cylinderflip1-6}}
    \subfigure[``Glue'' the graph back together]{\includegraphics[scale=0.13]{cylinderflip1-8}}
    \hspace{20pt}
    \subfigure[The resulting graph after a cylinder flip]{\includegraphics[scale=0.13]{cylinderflip2}}
    \caption{A cylinder flip over $\{s_1, s_2; t_1, t_2\}$.}
\end{figure}
\end{df}

To show why the cylinder flip preserves a matroid, we first prove a lemma:

\begin{lem}\label{Lem:cylinderflip}Let $H_1$ and $H_2$ be two signed graphs containing only positive edges. Let $s_1, s_2, t_1, t_2 \in V(H_1)$ and $s_1', s_2', t_1', t_2' \in V(H_2)$. Let $\Omega$ (resp. $\Omega'$) be the signed graph obtained by adding two positive edges $\{s_1, s_1'\}$, $\{s_2, s_2'\}$ (resp. $\{s_1, s_2'\}$, $\{s_2, s_1'\}$ ) and two negative edges $\{t_1, t_1'\}$, $\{t_2, t_2'\}$ (resp. $\{t_1, t_2'\}$, $\{t_2, t_1'\}$) between $H_1$ and $H_2$. Then the corresponding matroids $M(\Omega)$ and $M(\Omega')$ of these signed graphs are equivalent.
\renewcommand\thesubfigure{(\alph{subfigure})}
\begin{figure}[H]
	\centering
	\subfigure[$\Omega$]{\includegraphics[scale=0.13]{flip1}}
	\hspace{20pt}
    \subfigure[$\Omega'$]{\includegraphics[scale=0.13]{flip2}}
\caption{The signed graphs $\Omega$ and $\Omega'$ of Lemma \ref{Lem:cylinderflip}.}
\end{figure}
\end{lem}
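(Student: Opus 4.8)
The plan is to build an explicit isomorphism between the matroids $M(\Omega)$ and $M(\Omega')$ by exhibiting a bijection on the edge sets that carries bases to bases. Since $H_1$ and $H_2$ consist only of positive edges, each $H_i$ is a balanced signed graph, so a subset of $E(H_i)$ is independent in the signed-graphic matroid exactly when it is a forest of $H_i$ in the ordinary graph sense. The only negative cycles in $\Omega$ (and in $\Omega'$) must use at least one of the two negative connecting edges at $\{t_1,t_1'\}$, $\{t_2,t_2'\}$ (resp.\ $\{t_1,t_2'\}$, $\{t_2,t_1'\}$), since any cycle avoiding both of those edges lies in the balanced part $H_1\cup H_2$ plus the two positive $s$-edges, hence has an even number of negative edges, i.e.\ is positive. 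So the combinatorial structure of both $\Omega$ and $\Omega'$ is governed by how forests of $H_1$, forests of $H_2$, and subsets of the four connecting edges fit together.

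First I would set up notation: write $e_1,e_2$ for the two positive connecting edges and $f_1,f_2$ for the two negative ones in $\Omega$, and $e_1',e_2',f_1',f_2'$ for the corresponding edges in $\Omega'$, and define the bijection $\varphi$ to be the identity on $E(H_1)\cup E(H_2)$ and to send $e_i\mapsto e_i'$, $f_i\mapsto f_i'$. Then I would characterize the spanning forests (bases) of $\Omega$ explicitly. A basis is a spanning forest in which every connected component contains at most one negative cycle and no positive cycle; using the structure above, a basis of $\Omega$ restricted to each $H_i$ must be a forest, and the four connecting edges are added so that either we connect the pieces up into a single spanning tree (using exactly the right number of connecting edges, all behaving as positive-type links since a tree has no cycle), or we allow exactly one extra edge that closes up a single negative cycle through two of the $t$-vertices. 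The key point is a rank/connectivity bookkeeping: because $\{s_1,s_2,t_1,t_2\}$-type vertices are the only attachment points, the question of whether a given collection of connecting edges plus forests of $H_1,H_2$ forms a basis depends only on (a) which components of $H_1$ the $s$- and $t$-vertices lie in, (b) likewise for $H_2$, and (c) the "matching pattern" of the connecting edges — and the flip from $\Omega$ to $\Omega'$ permutes the connecting edges in a way that preserves exactly this data, because swapping $s_1'\leftrightarrow s_2'$ and $t_1'\leftrightarrow t_2'$ on the $H_2$ side is the same as relabeling the four attachment vertices of $H_2$.

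More precisely, I would argue that $\Omega'$ is isomorphic to the signed graph obtained from $\Omega$ by applying the graph automorphism-type relabeling that interchanges the roles of the two "slots" on the $H_2$ side, possibly composed with a resigning (Lemma \ref{Lem:resigning}) to fix up the signs of the connecting edges — since in $\Omega'$ the negative edges are $\{t_1,t_2'\}$ and $\{t_2,t_1'\}$, one checks whether a resigning around the vertex set $V(H_2)$ (which flips the sign of every edge with exactly one end in $H_2$, namely all four connecting edges) reconciles the two sign patterns, adjusting by the observation that a global sign pattern on the connecting edges is only meaningful up to such resignings. The cleanest route is: relabel $H_2$'s attachment vertices via $s_1'\leftrightarrow s_2'$, $t_1'\leftrightarrow t_2'$ to turn $\Omega$'s edge set literally into $\Omega'$'s edge set as abstract edges, note this is a signed-graph isomorphism onto a signed graph that differs from $\Omega'$ only possibly in the signs of connecting edges, and then invoke Lemma \ref{Lem:resigning} to absorb that difference. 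An isomorphism of signed graphs induces an isomorphism of signed-graphic matroids, and resigning preserves the matroid, so $M(\Omega)\cong M(\Omega')$.

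The main obstacle I anticipate is making the "matching pattern is preserved" step fully rigorous rather than hand-wavy: one has to be careful about the cases where two of the distinguished vertices $s_1,t_1$ (or their $H_2$-counterparts) coincide or lie in the same component of $H_i$, and about bases that realize a negative cycle versus bases that form a spanning tree — the connecting edges play subtly different roles in the two cases. I would handle this by reducing everything to a statement purely about how forests plus connecting edges combine, and observing that the relabeling $s_i'\leftrightarrow s_{3-i}'$, $t_i'\leftrightarrow t_{3-i}'$ together with the corresponding relabeling of connecting edges is literally a bijection of signed graphs, so no case analysis of bases is actually needed once that identification is in place — the only genuine content is checking that the sign discrepancy on the four connecting edges is exactly a coboundary $\delta(V(H_2))$ and hence removable by resigning.
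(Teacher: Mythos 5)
There is a genuine gap, and it sits at the heart of your ``cleanest route.'' The vertex swap $s_1'\leftrightarrow s_2'$, $t_1'\leftrightarrow t_2'$ is a relabeling of vertices of $H_2$, and relabeling vertices moves the edges of $H_2$ along with them: every edge of $H_2$ incident with $s_1'$ becomes incident with $s_2'$ in the relabeled graph. So the graph you obtain is not $\Omega'$ (which contains the \emph{original} $H_2$) but merely an isomorphic copy of $\Omega$. Your construction only lands on $\Omega'$ if the swap happens to be an automorphism of $H_2$, which the lemma does not assume and which fails in general. Indeed, $\Omega$ and $\Omega'$ are typically \emph{not} isomorphic as (signed) graphs --- this operation is the signed analogue of the Whitney flip, whose entire interest is that it preserves the matroid while changing the graph. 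Consequently no argument of the form ``graph isomorphism composed with a resigning'' can prove the lemma: both of those operations preserve the isomorphism type of the underlying graph up to the swap you chose, and the discrepancy between $\Omega$ and $\Omega'$ is not of that kind. (Your worry about reconciling signs via a resigning around $V(H_2)$ is a red herring; under your edge bijection positive connecting edges already map to positive ones and negative to negative. The problem is that the underlying graph map does not exist.)

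The salvageable part of your proposal is the bookkeeping sketch in your second paragraph, but that is precisely the step you then declare unnecessary (``no case analysis of bases is actually needed''), and it is the only place where the actual content lives. The claim that independence/circuit structure ``depends only on the matching pattern of the connecting edges'' and that the flip preserves this data is exactly what must be proved. The paper does this directly: it fixes the identity correspondence on edges and checks, circuit type by circuit type (positive cycles using $0$ or $2$ of the negative connecting edges; two negative cycles meeting in a vertex; two negative cycles joined by a path), that every circuit of $\Omega$ maps to a circuit of $\Omega'$, using the fact that $H_1$ and $H_2$ are all-positive so every negative cycle must use exactly one of the two negative connecting edges and exactly one other connecting edge to close up. Some such case analysis over how a circuit crosses the four-edge interface is unavoidable; you should reinstate it rather than route around it through a graph isomorphism that is not there.
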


\begin{proof}We will prove that if $X \subseteq E(\Omega)$ is a circuit of $\Omega$, then the corresponding set of edges in $\Omega'$ is also a circuit of $\Omega'$. The other direction follows by symmetry.

If $X$ is contained entirely in $H_1$ or $H_2$, then since $\Omega$ and $\Omega'$ have equivalent subgraphs $H_1$ and $H_2$, $X$ is a circuit in $\Omega$ if and only if it is a circuit in $\Omega'$. So we may assume that $X \cap H_1 \neq \emptyset$ and $X \cap H_2 \neq \emptyset$.

If $X \subseteq E(\Omega)$ is a circuit of $\Omega$, then it is either a positive cycle, two negative cycles meeting at a point, or two negative cycles connected by a path.

\underline{Case 1}. Suppose $X$ is a positive cycle, i.e., $X$ contains an even number of negative edges. Since there are only two negative edges, $X$ either contains 0 or 2 negative edges.

If $X$ contains no negative edges, $X$ must be a union of paths $P_1 \cup \{ s_1, s_1' \} \cup P_2 \cup \{ s_2, s_2' \}$ where $P_1$ (resp. $P_2$) is a path contained entirely in $H_1$ (resp. $H_2$). Then the same set of edges in $P_1 \cup \{ s_1, s_2' \} \cup P_2 \cup \{ s_2, s_1' \}$ form a positive cycle in $\Omega'$:

\setcounter{subfigure}{0}
\begin{figure}[H]
	\centering
	\subfigure[A positive cycle in $\Omega$]{\includegraphics[scale=0.13]{flip1_circuit6}}
	\hspace{20pt}
    \subfigure[A positive cycle in $\Omega'$]{\includegraphics[scale=0.13]{flip2_circuit6}}
\end{figure}

If $X$ contains two negative edges, then the following figures enumerate all possible scenarios, up to symmetry. In all scenarios, the same set of edges also correspond to a positive cycle in $\Omega'$:

\setcounter{subfigure}{0}
\begin{figure}[H]
	\centering
	\subfigure[A positive cycle in $\Omega$]{\includegraphics[scale=0.13]{flip1_circuit3}}
	\hspace{20pt}
    \subfigure[A positive cycle in $\Omega'$]{\includegraphics[scale=0.13]{flip2_circuit3}}
\end{figure}

\setcounter{subfigure}{0}
\begin{figure}[H]
	\centering
	\subfigure[A positive cycle in $\Omega$]{\includegraphics[scale=0.13]{flip1_circuit4}}
	\hspace{20pt}
    \subfigure[A positive cycle in $\Omega'$]{\includegraphics[scale=0.13]{flip2_circuit4}}
\end{figure}

\setcounter{subfigure}{0}
\begin{figure}[H]
	\centering
	\subfigure[A positive cycle in $\Omega$]{\includegraphics[scale=0.13]{flip1_circuit5}}
	\hspace{20pt}
    \subfigure[A positive cycle in $\Omega'$]{\includegraphics[scale=0.13]{flip2_circuit5}}
\end{figure}

\underline{Case 2}. Suppose $X$ contains two negative cycles in $\Omega$. Since $\Omega$ contains only two negative edges $\{t_2, t_2'\}$ and $\{t_1, t_1'\}$, each negative cycle must contain one of these negative edges. Moreover, since there are only two other edges $\{s_1, s_1'\}$ and $\{s_2, s_2'\}$ connecting $H_1$ and $H_2$, each negative cycle must use one of these edges in order to close the walk.

Suppose the two negative cycles meet at a vertex $v$ in $\Omega$. Without loss of generality, we may assume $v \in H_1$. Then the following figures enumerate all possible scenarios, up to symmetry. In all scenarios, the same set of edges also correspond to two negative cycles meeting at a point in $\Omega'$:
\setcounter{subfigure}{0}
\begin{figure}[H]
	\centering
	\subfigure[Two negative cycles meeting in a point in $\Omega$]{\includegraphics[scale=0.13]{flip1_circuit1}}
	\hspace{20pt}
    \subfigure[Two negative cycles meeting in a point in $\Omega'$]{\includegraphics[scale=0.13]{flip2_circuit1}}
\end{figure}

\setcounter{subfigure}{0}
\begin{figure}[H]
	\centering
	\subfigure[Two negative cycles meeting in a point in $\Omega$]{\includegraphics[scale=0.13]{flip1_circuit0}}
	\hspace{20pt}
    \subfigure[Two negative cycles meeting in a point in $\Omega'$]{\includegraphics[scale=0.13]{flip2_circuit0}}
\end{figure}

Otherwise, suppose the two negative cycles are joined by a path $P$ in $\Omega$. Without loss of generality, we may assume $P \subseteq H_1$. Then the same set of edges also correspond to two negative cycles joined by a path in $\Omega'$:

\setcounter{subfigure}{0}
\begin{figure}[H]
	\centering
	\subfigure[Two negative cycles connected by a path in $\Omega$]{\includegraphics[scale=0.13]{flip1_circuit2}}
	\hspace{20pt}
    \subfigure[Two negative cycles connected by a path in $\Omega'$]{\includegraphics[scale=0.13]{flip2_circuit2}}
\end{figure}

\setcounter{subfigure}{0}
\begin{figure}[H]
	\centering
	\subfigure[Two negative cycles connected by a path in $\Omega$]{\includegraphics[scale=0.13]{flip1_circuit-1}}
	\hspace{20pt}
    \subfigure[Two negative cycles connected by a path in $\Omega'$]{\includegraphics[scale=0.13]{flip2_circuit-1}}
\end{figure}

We have shown that any circuit $X \subseteq E(\Omega)$ of $\Omega$ is also a circuit in $\Omega'$. The proof of the other direction follows by symmetry. Therefore, we conclude that $M(\Omega) = M(\Omega')$.
\end{proof}

\begin{thm}[Cylinder flip]\label{Thm:cylinderflip2}
Let $\Omega$ be a cylinder graph on the blocking pair vertices $\{s_1, s_2; t_1, t_2\}$. If $\Omega'$ is obtained from $\Omega$ via a cylinder flip over $\{s_1, s_2; t_1, t_2\}$, then $M(\Omega) = M(\Omega')$.

\renewcommand\thesubfigure{(\alph{subfigure})}
\begin{figure}[H]
	\centering
	\subfigure[$\Omega$]{\includegraphics[scale=0.13]{cylinderflip1-1}}
	\hspace{20pt}
    \subfigure[$\Omega'$]{\includegraphics[scale=0.13]{cylinderflip2}}
\caption{The signed graphs $\Omega$ and $\Omega'$ of Cor. \ref{Thm:cylinderflip2}.}
\end{figure}
\end{thm}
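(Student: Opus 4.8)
The plan is to obtain Theorem~\ref{Thm:cylinderflip2} as a consequence of Lemma~\ref{Lem:cylinderflip}, using Lemma~\ref{Lem:resigning} and the standard fact that contracting a non-loop edge of a signed graph corresponds to contracting the associated element of its signed-graphic matroid. Fix the $4$-vertex cut $\{s_1,s_2,t_1,t_2\}$; splitting it divides $\Omega$ into the pieces $H_1,H_2$ of the cylinder-graph definition, and after the resigning built into that definition --- harmless by Lemma~\ref{Lem:resigning} --- every negative edge of $\Omega$ is incident with $t_1$ or $t_2$ and all negative edges lie in $H_1$.

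First I would encode the gluings of the split vertices as contractions of auxiliary edges. Let $\Phi$ be the signed graph built from $H_1\sqcup H_2$ by inserting four positive edges $s_1s_1'$, $s_2s_2'$, $t_1t_1'$, $t_2t_2'$, and let $\Phi'$ be the analogous graph with the swapped positive edges $s_1s_2'$, $s_2s_1'$, $t_1t_2'$, $t_2t_1'$. Contracting the set $T$ of these four edges re-identifies the split vertices, so that $\Phi/T=\Omega$ and $\Phi'/T=\Omega'$; hence $M(\Omega)=M(\Phi)/T$ and $M(\Omega')=M(\Phi')/T$. Comparing $\Phi'/T$ with the ``split, flip the centre, reglue'' recipe of the definition confirms that $\Phi'/T$ really is the cylinder flip of $\Omega$.

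Next I would resign $\Phi$ into the exact shape required by Lemma~\ref{Lem:cylinderflip}. From the cylinder embedding I would argue that $H_1$ and $H_2$ each contain only contractible --- hence positive --- cycles, so both are balanced, and that $\{s_1,s_2\}$ and $\{t_1,t_2\}$ lie on opposite classes of the balancing bipartition of $H_1$. Resigning $\Phi$ around the class containing $t_1,t_2$ then makes all of $H_1$ and all of $H_2$ positive, keeps the two $s$-edges positive (since $s_1,s_2$ lie in the other class), and turns the two $t$-edges negative (since $t_1,t_2$ are resigned but $t_1',t_2'$ are not). Call the result $\widehat\Omega$; it is precisely the graph named $\Omega$ in Lemma~\ref{Lem:cylinderflip}, and the same resigning applied to $\Phi'$ produces the graph $\widehat\Omega'$ named $\Omega'$ there. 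Since resigning preserves the matroid, $M(\widehat\Omega)=M(\Phi)$ and $M(\widehat\Omega')=M(\Phi')$.

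Finally, Lemma~\ref{Lem:cylinderflip} gives $M(\widehat\Omega)=M(\widehat\Omega')$, and contracting $T$ on both sides yields
\[
M(\Omega)=M(\Phi)/T=M(\widehat\Omega)/T=M(\widehat\Omega')/T=M(\Phi')/T=M(\Omega'),
\]
as desired. The step I expect to be the main obstacle is the normalization: deriving from the cylinder-embedding hypotheses that $H_1$ (and $H_2$) is balanced and that its balancing bipartition separates the two blocking pairs, so that the resigning into the form of Lemma~\ref{Lem:cylinderflip} is actually available; a secondary technical point is keeping the signs straight while contracting the two negative connecting edges, since a negative link is contracted by first resigning one of its ends. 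Should the balancedness of $H_1$ fail in full generality, the fallback is to abandon the reduction and rerun the circuit-by-circuit case analysis of Lemma~\ref{Lem:cylinderflip} --- positive cycles, two negative cycles meeting at a vertex, and two negative cycles joined by a path --- directly in the cylinder graph, using the cylinder embedding to control how each circuit interacts with the $4$-vertex cut.
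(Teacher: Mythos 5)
Your proposal is essentially the paper's own proof: both reduce the theorem to Lemma~\ref{Lem:cylinderflip} by splitting the four blocking-pair vertices, joining the two pieces $H_1,H_2$ with four auxiliary edges, resigning so that the lemma's hypotheses (all-positive $H_1,H_2$, two positive $s$-edges, two negative $t$-edges) hold, applying the lemma, and then resigning and contracting the four auxiliary edges to recover $\Omega$ and $\Omega'$. The only difference is direction and emphasis --- the paper starts from the lemma's graphs $H,H'$ and resigns/contracts down to $\Omega,\Omega'$, whereas you build up from $\Omega$ to the lemma's configuration and, usefully, make explicit the normalization step (that the cylinder embedding lets one resign $H_1$ and $H_2$ to be entirely positive with the blocking pairs positioned correctly) which the paper's proof leaves implicit.
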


\begin{proof} By Lemma \ref{Lem:cylinderflip}, the matroids of the following signed graphs $H$ and $H'$ are equivalent:

\renewcommand\thesubfigure{(\alph{subfigure})}
\begin{figure}[H]
	\centering
	\subfigure[$H$]{\includegraphics[scale=0.13]{cylinderflip1_1}}
	\hspace{20pt}
    \subfigure[$H'$]{\includegraphics[scale=0.13]{cylinderflip2_1}}
\end{figure}
Resigning all edges incident with $t_1$ and $t_2$, then contracting the edges $\{s_1, s_1'\}$, $\{s_2, s_2' \}$, $\{t_1, t_1'\}$, and $\{t_2, t_2'\}$ in $H$ gives $\Omega$. Similarly, resigning all edges incident with $t_1$ and $t_2$, then contracting the edges $\{s_1, s_2'\}$, $\{s_2, s_1' \}$, $\{t_1, t_2'\}$, and $\{t_2, t_1'\}$ in $H'$ gives $\Omega'$. Since contraction in the signed graph corresponds to the abstract matroid contraction, we conclude that $M(\Omega) = M(\Omega')$.
\end{proof}

\bigskip
\subsubsection{Degenerate cases}\label{subsubsection:degenerate_case}
With a little more work, we can also prove Theorem \ref{Thm:cylinderflip2} for the degenerate cases, e.g. when $s_1 = t_1$.

\renewcommand\thesubfigure{(\arabic{subfigure})}
\begin{figure}[H]
	\centering
    \subfigure[Signed graph $\Omega$]{\includegraphics[scale=0.13]{degenerate_cylinderflip1-1}}
    \hspace{20pt}
    \subfigure[Split each blocking vertex into two vertices: $s_i$ into $s_i$ and $s_i'$, and $t_i$ into $t_i$ and $t_i'$.]{\label{fig:A1}\includegraphics[scale=0.13]{degenerate_cylinderflip1-2}}
    \hspace{20pt}
    \subfigure[Flip the center subgraph vertically.]{\includegraphics[scale=0.13]{degenerate_cylinderflip1-3}}
    
    \subfigure[``Glue'' the graph back together.]{\includegraphics[scale=0.13]{degenerate_cylinderflip1-4}}
    \hspace{20pt}
    \subfigure[The resulting graph after a cylinder flip.]{\includegraphics[scale=0.13]{degenerate_cylinderflip2}}
    \caption{A cylinder flip in the degenerate case when $s_1=t_1$.}
\end{figure}

\subsection{Example}\label{subsection:flip_example}

Consider the dyadic matroid $M$ of rank 6 on 10 elements whose $\mathbb{D}$-representation is
\begin{align*}
\begin{footnotesize}
	\left(\begin{array}{rrrrrrrrrr}
	1 & 0 & 0 & 0 & 0 & 2 & 0 & 0 & 1 &	0 \\
	0 & 1 & 0 & 0 & 0 & 1 & 0 & 2 & 0 &	0 \\
	0 & 0 & 1 & 0 & 0 & 0 & 1 & 1 & 1 &	0 \\
	0 & 0 & 0 & 1 & 0 & 0 & 1 & 1 & 0 &	0 \\
	0 & 0 & 0 & 0 & 1 & 1 & 0 & 2 & 2 &	0 \\
	0 & 0 & 0 & 0 & 0 & 2 & 2 & 0 & 0 &	1
	\end{array}\right).
\end{footnotesize}
\end{align*}

Two of its signed-graphic representations are given below\footnote{A complete list of its signed-graphic representations can be found in Appendix \ref{App:sameAll}.}:
\renewcommand\thesubfigure{(\arabic{subfigure})}
\begin{figure}[H]
\minipage[r]{0.48\textwidth}
  \center
  \hspace*{20mm}\includegraphics[scale=0.35]{A0}
  \vspace{-5mm}
  \caption*{\label{fig:sameAll1}$\Omega =$\usebox{\sameAllone}}
\endminipage\hfill
\minipage[r]{0.48\textwidth}
  \center
  \hspace*{20mm}\includegraphics[scale=0.35]{A8}
  \vspace{-5mm}
  \caption*{\label{fig:sameAll2}$\Omega' =$\usebox{\sameAlltwo}}
\endminipage\hfill
\end{figure}

The cylinder flip can be used to get from $\Omega$ to $\Omega'$, as illustrated below. 

\begin{figure}[H]
	\centering
    \subfigure[Signed graph of $\Omega$, after resigning the edges incident with the circled vertices. The blocking pairs are $\{s_1, s_2 \}$ and $\{t_1, t_2\}$.]{\label{fig:A1}\includegraphics[width=.25\textwidth]{A2}}
	\hspace{10pt}
	\subfigure[Split the blocking pairs, then temporarily resign edges so that all edges are positive.]{\label{fig:A3}\includegraphics[width=.25\textwidth]{A3}}
	\hspace{10pt}
	\subfigure[Flip the detached subgraph.]{\label{fig:A3}\includegraphics[width=.25\textwidth]{A4}}
	\hspace{10pt}
	\subfigure[Resign the edges before gluing the vertices back together]{\label{fig:A5}\includegraphics[width=.25\textwidth]{A5}}
	\hspace{10pt}
	\subfigure[Glue the blocking pair vertices together]{\label{fig:A6}\includegraphics[width=.25\textwidth]{A6}}
	\hspace{10pt}
	\subfigure[After resigning the edges incident with the circled vertices, we get the signed graph of $\Omega'$.]{\label{fig:A7}\includegraphics[width=.25\textwidth]{A7}}
	\caption{A sequence of matroid-preserving operations to get from $\Omega$ to $\Omega'$.}
\end{figure}

\section{Restricting column scaling factors to $\pm 1$}\label{Section:Restricting_column_scaling}

If $M$ is a rank-$r$ matroid with $r \geq 1$, then every matrix that represents $M$ over a field $\mathbb{F}$ is projectively equivalent to a standard representative matrix $[I_r \mid D]$ for $M$. \eat{Suppose we are given a representation $[I_r \mid D_1]$ of a matroid over a field $\mathbb{F}$.} If we allow row and column scalings, then Theorem \ref{brylawski} (\cite[6.4.7]{oxley}) shows that we can transform certain entries of $D_1$ into any nonzero elements $\theta_1, \theta_2, \ldots, \theta_k \in \mathbb{F}$ that we want. On the other hand, if we restrict column scaling factors to $\pm 1$, then Lemma \ref{notbrylawski} shows that our choices of $\theta_i$'s become very limited. Before stating Theorem \ref{brylawski}, we first require some preliminaries.

\subsection{Preliminaries}

Given a standard representation $[I_r \mid D]$ for a matroid $M$, let its columns be labeled, in order, $e_1, e_2, \ldots, e_n$. If $B$ is the basis $\{e_1, e_2, \ldots, e_r\}$, then it is natural to label the rows of $[I_r \mid D]$ by $e_1, e_2, \ldots, e_r$. Hence $D$ has its rows labeled by $e_1, e_2, \ldots, e_r$ and its columns labeled by $e_{r+1}, e_{r+2}, \ldots, e_n$.

\eat{For all $k \in \{ r+1, r+2, \ldots, n \}$, the \emph{fundamental circuit} is 
	\[ C(e_k, B) := e_k \cup \{ e_i: e_i \in B, \; D_{e_i, e_k} \neq 0 \}. \]}
The \emph{$B$-fundamental-circuit incidence matrix} of $M$, which we denote by $D^{\#}$, is the matrix obtained from $D$ by replacing each nonzero entry of $D$ by a 1, where the rows and columns of $D^\#$ inherit their labels from $D$. \eat{, so that the columns of $D^{\#}$ are precisely the incidence vectors of the sets $C(e_k, B) - e_k$ where the rows and columns of $D^\#$ inherit their labels from $D$.} $[I_r \mid D^\#]$ is sometimes called a \emph{partial representation} for $M$ because of the following result (\cite[6.4.1]{oxley}):

\begin{prop}\label{6.4.1}
Let $[I_r \mid D_1]$ and $[I_r \mid D_2]$ be matrices over the fields $\mathbb{K}_1$ and $\mathbb{K}_2$ with the columns of each labeled, in order, by $e_1, e_2, \ldots, e_n$. If the identity map on $\{ e_1, e_2, \ldots, e_n \}$ is an isomorphism from $M[I_r \mid D_1]$ to $M[I_r \mid D_2]$, then $D_1^\# = D_2^\#$.
\end{prop}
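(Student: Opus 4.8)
The plan is to show that every column of $D^\#$ is determined by the matroid $M$ together with the basis $B=\{e_1,\dots,e_r\}$ picked out by the identity columns, so that it cannot depend on which standard representation $[I_r\mid D]$ one uses. The bridge between the algebra and the matroid is a description of fundamental circuits: I claim that for each $k\in\{r+1,\dots,n\}$, the fundamental circuit of $e_k$ with respect to $B$ in $M[I_r\mid D]$ is exactly $\{e_k\}\cup\{e_i : i\le r,\ D_{e_i,e_k}\neq 0\}$, that is, $e_k$ together with the support of the $e_k$-column of $D$.

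First I would establish that claim. In $[I_r\mid D]$ the $e_k$-column is, by construction, the coordinate vector of $e_k$ in the basis of columns $e_1,\dots,e_r$; that is, $e_k=\sum_{i=1}^r D_{e_i,e_k}\,e_i$ as vectors, and this expansion is unique. Write $S=\{e_i : D_{e_i,e_k}\neq 0\}$ for its support. The displayed relation shows $S\cup\{e_k\}$ is dependent. It is moreover a circuit: removing $e_k$ leaves $S\subseteq B$, which is independent; and removing any $e_i\in S$ leaves $(S\setminus\{e_i\})\cup\{e_k\}$, which is independent, since if $e_k$ lay in $\mathrm{span}(S\setminus\{e_i\})$ then, comparing with the unique basis expansion of $e_k$, we would get $D_{e_i,e_k}=0$, a contradiction. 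Thus $S\cup\{e_k\}$ is a minimal dependent set contained in $B\cup\{e_k\}$, hence it equals the fundamental circuit $C(e_k,B)$, the unique circuit of $M[I_r\mid D]$ inside $B\cup\{e_k\}$. The degenerate cases ($e_k$ a loop, so $S=\emptyset$; or $e_k$ parallel to some $e_i$, so $S=\{e_i\}$) are instances of the same statement.

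Now I would assemble the proof. Since the identity map on $\{e_1,\dots,e_n\}$ is an isomorphism from $M[I_r\mid D_1]$ to $M[I_r\mid D_2]$, these are one and the same matroid $M$ on that labelled ground set, and $B=\{e_1,\dots,e_r\}$ is a basis of $M$ because those columns form $I_r$ in each matrix. The fundamental circuit $C(e_k,B)$ is an invariant of the pair $(M,B)$ — it is the unique circuit of $M$ contained in $B\cup\{e_k\}$ — so it is the same set whether read off from $[I_r\mid D_1]$ or from $[I_r\mid D_2]$. Applying the claim to each matrix, for every $k$,
\[ \{\,e_i : i\le r,\ (D_1)_{e_i,e_k}\neq 0\,\}=C(e_k,B)\setminus\{e_k\}=\{\,e_i : i\le r,\ (D_2)_{e_i,e_k}\neq 0\,\}. \]
Hence the $e_k$-column of $D_1^\#$ equals the $e_k$-column of $D_2^\#$; since $k$ was arbitrary and the labels agree, $D_1^\#=D_2^\#$.

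There is no deep obstacle here: the only real content is the support-equals-fundamental-circuit claim, and the single point demanding care is the minimality argument — that dropping a support element $e_i$ genuinely restores independence — which rests on the uniqueness of coordinates in a basis. Everything else is bookkeeping about the shared labelling and the fact that $I_r$ exhibits a common basis in both representations.
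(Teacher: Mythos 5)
Your proof is correct. The paper itself gives no proof of this proposition --- it is quoted from Oxley [6.4.1] --- and your argument, identifying the support of each column of $D$ with $C(e_k,B)\setminus\{e_k\}$ (the fundamental circuit being an invariant of the labelled matroid and the basis $B$ exhibited by $I_r$), is precisely the standard argument behind that result, including the careful minimality step via uniqueness of coordinates.
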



\eat{Let $M$ be a rank-$r$ matroid and $B=\seq*{e_1, e_2, \ldots, e_r}$ be a basis for $M$. Let $X$ be the $B$-fundamental-circuit incidence matrix of $M$, the columns of this matrix being labeled by $e_{r+1}, e_{r+2}, \ldots, e_n$.

By Prop. \ref{6.4.1}, if $[I_r \mid D]$ is an $\mathbb{F}$-representation of $M$ with columns labeled, in order, $e_1, e_2, \ldots, e_n$, then $[I_r \mid D^\#] = [I_r \mid X]$. Hence, the task of finding an $\mathbb{F}$-representation for $M$ is equivalent to finding the specific elements of $\mathbb{F}$ that correspond to the nonzero elements of $D^\#$.

\begin{itemize}
\item By column scaling, WMA the first nonzero entry in each column of $D$ is 1.
\item By row scaling, WMA the first nonzero entry in each row of $D$ is 1.
\end{itemize}

The rows of $D^\#$ are indexed by $e_1, e_2, \ldots, e_r$ and its columns by $e_{r+1}, e_{r+2}, \ldots, e_n$.}

Let $G(D^\#)$ denote the \emph{associated} simple \emph{bipartite graph}, that is, $G(D^\#)$ has vertex classes $\seq*{e_1, e_2, \ldots, e_r}$ and $\seq*{e_{r+1}, e_{r+2}, \ldots, e_n}$, and two vertices $e_i$ and $e_j$ are adjacent if and only if $D^\#_{e_i,e_j}=1$. Equivalently, $G(D^\#)$ is the \emph{fundamental graph} $G_B(M)$ associated with the basis $B=\seq*{e_1, e_2, \ldots, e_r}$ of $M$ (See \cite[4.3.2]{oxley}).


\subsection{Brylawski-Lucas Theorem}

The following theorem gives a best-possible lower bound on the number of entries of $D$ whose values can be predetermined by row and column scaling, and also provides an easy way to see whether or not two representations are projectively equivalent.

\bigskip

\begin{thm}[Brylawski and Lucas 1976]\label{brylawski}
For a field $\mathbb{F}$, let the $r \times n$ matrix $[I_r \mid D_1]$ be an $\mathbb{F}$-representation of a matroid $M$. Let $\seq*{b_1, b_2, \ldots, b_k}$ be a basis of the cycle matroid of $G(D_1^\#)$. Then 
\begin{enumerate}
\item $k = n - \omega(G(D_1^\#))$.
\item If $(\theta_1, \theta_2, \ldots, \theta_k)$ is an ordered $k$-tuple of nonzero elements of $\mathbb{F}$, then $M$ has a unique $\mathbb{F}$-representation $[I_r \mid D_2]$ that is projectively equivalent to $[I_r \mid D_1]$ such that, for each $i \in \seq*{1, 2, \ldots, k}$, the entry of $D_2$ corresponding to $b_i$ is $\theta_i$. Indeed, $[I_r \mid D_2]$ can be obtained from $[I_r \mid D_1]$ by a sequence of row and column scalings.
\end{enumerate}
\end{thm}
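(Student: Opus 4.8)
The plan is to handle the two parts separately, with (1) a quick counting observation and (2) carrying the real content.

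\textbf{Part (1).} I would simply note that $G(D_1^\#)$ has exactly $n$ vertices, namely $e_1,\dots,e_n$, and is a simple (loopless, parallel-free) graph, so a basis of its cycle matroid is literally a maximal forest. A maximal forest of a graph on $n$ vertices with $\omega(G(D_1^\#))$ components has $n-\omega(G(D_1^\#))$ edges, giving $k = n-\omega(G(D_1^\#))$ at once.

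\textbf{Setup for part (2).} The key is a dictionary between scalings and vertex-labellings of $G(D_1^\#)$. A row scaling assigns a nonzero scalar to each row $e_i$ ($i\le r$) and a column scaling assigns a nonzero scalar to each column $e_j$ ($j>r$); after renormalizing the first block back to $I_r$ (which forces the scalar on column $e_i$, $i\le r$, to be the reciprocal of the scalar on row $e_i$), the combined effect is recorded by a single function $\gamma:V(G(D_1^\#))\to\mathbb{F}^*$, under which the entry at edge $uv$ of $D_1$ is replaced by $\gamma_u\gamma_v(D_1)_{uv}$. The zero pattern, hence $D^\#$, is unchanged, and the matroid and the projective-equivalence class are preserved. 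So part (2) reduces to the purely combinatorial problem: given the forest $\{b_1,\dots,b_k\}$ and prescribed nonzero values $\theta_1,\dots,\theta_k$, find $\gamma$ with $\gamma_u\gamma_v=\theta_i/(D_1)_{b_i}$ for each forest edge $b_i=uv$.

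\textbf{Existence and uniqueness.} For existence I would work one connected component of $G(D_1^\#)$ at a time: since $\{b_1,\dots,b_k\}$ is a maximal forest, its trace on each component is a spanning tree $T$; root $T$, set $\gamma=1$ at the root, and propagate along $T$, each non-root vertex being forced by its parent edge. This produces a valid $\gamma$, hence a matrix $D_2$ that is projectively equivalent to $[I_r\mid D_1]$, represents $M$, and has the prescribed entries. The main obstacle is uniqueness, and it hinges on bipartiteness. If $\gamma,\gamma'$ both realize the prescribed forest entries, then $\mu:=\gamma'/\gamma$ satisfies $\mu_u\mu_v=1$ on every forest edge; rooting each component's tree shows $\mu$ equals $\mu_{\mathrm{root}}^{\pm1}$ with the sign alternating by parity of distance to the root. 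But the two color classes of the bipartite graph $G(D_1^\#)$ are exactly the rows and the columns, so \emph{every} edge — including every non-forest edge inside a component — joins vertices of opposite parity; hence $\mu_u\mu_v=1$ on all edges, so $\gamma$ and $\gamma'$ yield the same matrix. Thus $D_2$ is independent of the choices in the existence step. To finish, by Proposition \ref{6.4.1} any standard representation of $M$ with the same column labels has the same $D^\#$, and any such representation projectively equivalent to $[I_r\mid D_1]$ is obtained from it by row and column scalings; so any $D_2'$ as in the statement arises from a scaling realizing the prescribed forest entries, whence $D_2'=D_2$. The same alternating-product-around-a-cycle invariance (the product $\prod (\text{entry})^{\pm1}$ over a cycle is scaling-invariant, since each vertex occurs in two consecutive cycle edges with opposite exponents) shows $k$ is best possible: any edge outside a maximal forest lies on a cycle, so its entry is constrained once the forest entries are fixed.
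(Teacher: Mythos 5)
Your proof is correct, but it takes a genuinely different route from the paper's. The paper proves part (2) by induction on the number of forest edges: it peels off a leaf $v$ of the forest $G_1$ with pendant edge $y$, fixes the $(G_1\setminus y)$-entries by induction, and then adjusts the $y$-entry by scaling the column $v$, or by scaling the row $v$ and repairing the disturbed column of $I_r$; it explicitly omits the uniqueness claim, deferring to Oxley. You instead set up a global ``gauge'' dictionary, encoding an entire row-and-column scaling as a vertex potential $\gamma:V(G(D_1^\#))\to\mathbb{F}^*$ that multiplies the entry at edge $uv$ by $\gamma_u\gamma_v$, and then solve for $\gamma$ all at once by rooting each tree and propagating. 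The payoff of your approach is that it also delivers the uniqueness statement (via the observation that any two solutions differ by $\mu$ with $\mu_u\mu_v=1$ on forest edges, hence --- because distance-parity classes in a connected bipartite graph coincide with the row/column bipartition --- on \emph{all} edges), together with the ``best possible'' remark about non-forest entries being determined; the paper's leaf-peeling induction is more local and elementary but proves only existence. One small point worth making explicit in your write-up: when you invoke Proposition \ref{6.4.1} and claim that any projectively equivalent standard representation $[I_r\mid D_2']$ arises from $[I_r\mid D_1]$ by row and column scalings alone, you should note that the row-operation matrix $T$ in $[I_r\mid D_2']=T[I_r\mid D_1]Y$ is forced to be diagonal by comparing the first $r$ columns; with that observed, your uniqueness argument is complete.
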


We repeat the proof of Theorem \ref{brylawski} from \cite[6.4.7]{oxley} here, in order to contrast with that of Lemma \ref{notbrylawski}. We omit the proof that $[I_r \mid D_2]$ is unique, and refer the reader to \cite[6.4.7]{oxley}.

\begin{proof}
(1) follows from the fact that, if $M=M(G)$ where $G$ is a graph, then $r(M) = |V(G)| - \omega(G)$.

To prove (2), let $G_1 \subseteq G(D_1^\#)$ be a forest, and for each edge $x \in E(G_1)$, $\theta(x)$ is a nonzero element of $\mathbb{F}$. We will show by induction on $|E(G_1)|$ that, by a sequence of row and column scalings of $[I_r \mid D_1]$, we can obtain a matrix $[I_r \mid D_2]$ such that, for all edges $x \in E(G_1)$, the entry in $D_2$ corresponding to $x$ is $\theta(x)$.

The hypothesis is trivially true for $|E(G_1)| = 0$. Assume it is true for $|E(G_1)| < m$ and let $|E(G_1)| = m \geq 1$. As $G_1$ is a forest with at least one edge, it has a vertex $v$ of degree one. Let $y \in E(G_1)$ be the edge incident with $v$. By induction, there is a sequence of row and column scalings of $[I_r \mid D_1]$ to obtain $[I_r \mid D_2]$ such that, for all edges $x \in E(G_1 \backslash y)$, the entry of $D_2$ corresponding to $x$ is $\theta(x)$.

Now $v$ corresponds to a row or a column of $D_2$. If $v$ corresponds to a column, then we can scale that column to make the $y$-entry equal to $\theta(y)$ without altering any of the $(G_1\backslash y)$-entries. If $v$ corresponds to a row, then as $y$ is the only edge meeting $v$, none of the entries of $D_2$ corresponding to edges of $G_1 \backslash y$ is in this row. So we can multiply this row by an appropriate nonzero scalar $t$ to make the $y$-entry equal to $\theta(y)$ without changing any of the $(G_1 \backslash y)$-entries. This multiplication may alter the entry in row $v$ of $I_r$, but we can fix this by multiplying the corresponding column of $I_r$ by $t^{-1}$ without affecting any other entries.

We conclude by induction that there is a sequence of row and column scalings of $[I_r \mid D_1]$ to obtain $[I_r \mid D_2]$ in the desired form.
\end{proof}

\subsubsection{Example} Let $B = \{e_1,e_2,e_3\}$. For the non-Fano $F_7^-$, the $B$-fundamental-circuit incidence matrix over a field $\mathbb{F}$ of characteristic $\neq2$ is given below:

\renewcommand\thesubfigure{(\alph{subfigure})}
\begin{figure}[H]
    \centering
    \subfigure[]{\includegraphics[scale=0.19]{nonfano}}
    \hspace{20pt}
    \subfigure[]{\raisebox{45pt}{$D^\# = \bordermatrix{ & e_4 & e_5 & e_6 & e_7\cr
                e_1 & 0 & 1 & 1 & 1\cr
                e_2 & 1 & 0 & 1 & 1\cr
                e_3 & 1 & 1 & 0 & 1}$}}
    \hspace{20pt}
    \subfigure[]{\includegraphics[scale=0.117]{bipartite}}
\caption{(a) A geometric representation for $F_7$, (b) the $B$-fundamental-circuit incidence matrix, and (c) its associated bipartite graph.}
\end{figure}

The boxed entries in $D^\#$ below correspond to a basis of the cycle matroid of $G(D^\#)$.
\[D^\# = \bordermatrix{ & e_4 & e_5 & e_6 & e_7\cr
                e_1 & 0 & \fbox{1} & \fbox{1} & \fbox{1}\cr
                e_2 & \fbox{1} & 0 & 1 & \fbox{1}\cr
                e_3 & 1 & \fbox{1} & 0 & 1}\]

By Theorem \ref{brylawski}, we can independently assign arbitrary nonzero elements of $\mathbb{F}$ to each of those boxed entries. Note that $n - \omega(G(D^\#)) = 7 - 1 = 6$.

\subsection{Restricting column scaling}

If we restrict column scaling factors to $\pm 1$, then we cannot choose $\theta(x)$ freely as we did in Theorem $\ref{brylawski}$, and instead we are only restricted to $\theta(x)$ such that the entry corresponding to $x$ is $\pm \theta(x)$. The following lemma illustrates this.

\eat{\begin{lem}For a field $\mathbb{F}$, let the $r \times n$ matrix $[I_r \mid D_1]$ be an $\mathbb{F}$-representation of a matroid $M$. Let $\seq*{b_1, b_2, \ldots, b_k}$ be a basis of the cycle matroid of $G(D_1^\#)$, and $(\theta_1, \theta_2, \ldots, \theta_k)$ an ordered $k$-tuple of nonzero elements of $\mathbb{F}$ such that the entry of $D_1$ corresponding to $b_i$ is $\pm \theta_i$. Then $M$ has a unique $\mathbb{F}$-representation $[I_r \mid D_2]$ that is projectively equivalent to $[I_r \mid D_1]$ such that, for each $i \in \seq*{1, 2, \ldots, k}$, the entry of $D_2$ corresponding to $b_i$ is $\theta_i$. Indeed, $[I_r \mid D_2]$ can be obtained from $[I_r \mid D_1]$ by a sequence of row and column scalings.
\end{lem}

In Theorem \ref{brylawski}, if we restrict the column scaling factors to $\pm 1$, then we can only choose $\theta_i$ such that the entry of $D_1$ corresponding to $b_i$ is $\pm \theta_i$.
\end{lem}} 

\begin{lem}\label{notbrylawski} With notation as in Theorem \ref{brylawski}, let $(\phi_1, \phi_2, \ldots, \phi_k)$ be an ordered $k$-tuple such that the entry of $D_1$ corresponding to $b_i$ is $\pm\phi_i$. Let $[I_r \mid D']$ be the matrix such that, for each $i \in \{ 1, 2, \ldots, k\}$, the entry of $D'$ corresponding to $b_i$ is $\phi_i$. Then we can obtain $[I_r \mid D']$ from $[I_r \mid D_1]$ by a sequence of row and column scalings by $\pm 1$.
\end{lem}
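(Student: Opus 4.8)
The plan is to re-run the induction from the proof of Theorem~\ref{brylawski} essentially verbatim, while tracking the scaling factors and checking that each can be taken in $\{\pm1\}$. The first thing to record is the invariant that drives everything: a row scaling by $\pm1$, or a column scaling by $\pm1$, leaves the absolute value of every entry unchanged. Hence after \emph{any} sequence of such scalings of $[I_r\mid D_1]$, the entry of the ``$D$'' block at each $b_i$ is still $\pm\phi_i$ (it is $\pm\phi_i$ in $D_1$ by hypothesis, and $\phi_i\neq0$ since $b_i$ is an edge of $G(D_1^\#)$, so ``$\pm$'' is meaningful). This invariant is precisely what will stand in for the arbitrary nonzero scalar $t$ used in Oxley's argument.

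The induction is on $|E(G_1)|$, where one proves the following slightly more general statement: for every forest $G_1\subseteq G(D_1^\#)$ and every assignment $x\mapsto\phi(x)$ of nonzero scalars to the edges $x\in E(G_1)$ with the entry of $D_1$ at $x$ equal to $\pm\phi(x)$, some sequence of $\pm1$ row and column scalings of $[I_r\mid D_1]$ makes the entry at $x$ equal to $\phi(x)$ for all $x\in E(G_1)$. Lemma~\ref{notbrylawski} is the case where $G_1$ is the spanning forest with edge set $\{b_1,\dots,b_k\}$. The base case $E(G_1)=\emptyset$ is trivial. For the step, choose a degree-one vertex $v$ of $G_1$ with pendant edge $y$, apply the inductive hypothesis to $G_1\setminus y$ to reach (via $\pm1$ scalings) a matrix $[I_r\mid D_2]$ agreeing with the target on every edge of $G_1\setminus y$, and note that by the invariant the $y$-entry of $D_2$ is $\pm\phi(y)$. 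If $v$ indexes a column of $D_2$, scale that column by the sign that turns the $y$-entry into $\phi(y)$; since $\deg_{G_1}v=1$, no edge of $G_1\setminus y$ is incident with $v$, so no already-fixed entry moves and $I_r$ is untouched. If $v$ indexes a row of $D_2$, scale that row by the sign that makes the $y$-entry $\phi(y)$; again no fixed entry lies in row $v$, and if the factor is $-1$ the resulting change to the $(v,v)$-entry of $I_r$ is undone by scaling the $v$th column (a unit column disjoint from the $D$ block) by $-1$. Every scaling used is by $\pm1$, so the induction closes.

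The step I would flag as the heart of the matter is the row-vertex case: this is exactly where the proof of Theorem~\ref{brylawski} needs a general scalar $t$, together with $t^{-1}$ to restore the identity block, so the whole content of Lemma~\ref{notbrylawski} is the observation that the hypothesis on $(\phi_1,\dots,\phi_k)$ makes the absolute-value invariant hold at the start, which then pins that scalar to $\{\pm1\}$ at every stage. Nothing else in the Brylawski--Lucas argument uses a nontrivial scalar, and since the scalings are projectivities, $[I_r\mid D']$ is automatically a representation of $M$. I would close with the moral for the surrounding discussion: once column factors are restricted to $\pm1$, the preassigned entries can only be sign-flipped, never freely reassigned as in Theorem~\ref{brylawski}.
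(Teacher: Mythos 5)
Your proof is correct and follows essentially the same route as the paper: re-running the Brylawski--Lucas induction on a pendant edge of the forest, splitting into the column-vertex and row-vertex cases, and observing that the hypothesis forces the needed scalar into $\{\pm1\}$. In fact your explicit absolute-value invariant (that $\pm1$ scalings never change $|{\cdot}|$ of any entry, so the $y$-entry of $D_2$ is still $\pm\phi(y)$ at the inductive step) makes precise a point the paper's own proof leaves somewhat implicit.
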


\begin{proof}
Let $G_1, D_2, v$, and $y$ be as in the proof of Theorem \ref{brylawski}. Again, $v$ can label a row or a column of $D_2$.

If $v$ corresponds to a column, then we can multiply the column by $\pm 1$ to make the $y$-entry equal to $\phi(y)$ without altering any of the $(G_1 \backslash y)$-entries. If $v$ corresponds to a row, then we can multiply this row by an appropriate nonzero scalar $t$ to make the $y$-entry equal to $\phi(y)$ without changing any of the $(G_1 \backslash y)$-entries. However, this multiplication may alter the entry in row $v$ of $I_r$. Since we only allow column scaling by $-1$, we can only fix this (by multiplying the corresponding column of $I_r$ by $\pm 1$) if $t = \pm 1$.

We note that in either case, $\phi(y)$ can only be $\pm 1$ times the original entry of $D_1$ corresponding to $y$, because column scaling is restricted to scaling by $\pm 1$.
\end{proof}

\section{Conclusion}

We have shown that whether to allow column scaling in relating two signed-graphic representations is a relevant question. Our examples show that column scaling is sometimes necessary in order to transform one signed-graphic representation into another; moreover, there exist many collections of signed-graphic representations that row-reduce to the same standard form. 

Given a binet matrix which can be obtained by row-reducing several different signed graphs, Musitelli does not mention which of the signed graphs is found by his algorithm, and his algorithm does not find all the signed graphs. It is possible that his algorithm makes some consistent canonical choice, or finds certain representations more attractive than others, but this is not mentioned in his paper. Because Musitelli's paper ignores the existence of both other row-equivalent and other row-inequivalent representations, there is some doubt as to what his algorithm does.

\newpage\appendix
\section{Examples of signed-graphic representations}\label{App:SignedGraphs}

\subsection{A matroid with some row-equivalent, some non-row-equivalent signed-graphic representations}

The following dyadic matroid of rank 6 on 10 elements whose $\mathbb{D}$-representation is
	\begin{align*}
	\begin{footnotesize}
	\left(\begin{array}{rrrrrrrrrr}
	1 & 0 & 0 & 0 & 0 & 0 & 2 & 1 & 2 &	0 \\
	0 & 1 & 0 & 0 & 0 & 1 & 0 & 1 & 1 &	0 \\
	0 & 0 & 1 & 0 & 0 & 1 & 0 & 0 & 1 &	0 \\
	0 & 0 & 0 & 1 & 0 & 0 & 1 & 2 & 2 &	0 \\
	0 & 0 & 0 & 0 & 1 & 0 & 1 & 0 & 2 &	0 \\
	0 & 0 & 0 & 0 & 0 & 0 & 0 & 2 & 2 &	1
	\end{array}\right)
	\end{footnotesize}\end{align*}

has 11 signed-graphic representations, some of which are row-equivalent and some of which are not.

The following two signed-graphic representations are not row-equivalent to any of the other representations; in other words, column scaling is required in order to transform either representation into a different signed-graphic representation:
\begin{figure}[H]
\minipage[t]{0.48\textwidth}
  \center
  \includegraphics[scale=0.5]{diffSome10}
  \vspace{-5mm}
  \caption*{\label{fig:diffSome10}\usebox{\diffSometen}}
\endminipage\hfill
\minipage[t]{0.48\textwidth}
  \center
  \hspace*{10mm}\includegraphics[scale=0.5]{diffSome11}
  \vspace{-5mm}
  \caption*{\label{fig:diffSome11}\usebox{\diffSomeeleven}}
\endminipage\hfill
\end{figure}

The remaining representations are all row-equivalent to one another:
\begin{figure}[H]
\minipage[t]{0.48\textwidth}
  \center
  \hspace*{15mm}\includegraphics[scale=0.5]{diffSome1}
  \vspace{-5mm}
  \caption*{\label{fig:diffSome1}\usebox{\diffSomeone}}
\endminipage\hfill
\minipage[t]{0.48\textwidth}
  \center
  \hspace*{15mm}\includegraphics[scale=0.5]{diffSome2}
  \vspace{-5mm}
  \caption*{\label{fig:diffSome2}\usebox{\diffSometwo}}
\endminipage\hfill
\end{figure}

\begin{figure}[H]
\minipage[t]{0.48\textwidth}
  \center
  \hspace*{20mm}\includegraphics[scale=0.6]{diffSome3}
  \vspace{-5mm}
  \caption*{\label{fig:diffSome3}\usebox{\diffSomethree}}
\endminipage\hfill
\minipage[t]{0.48\textwidth}
  \center
  \hspace*{15mm}\includegraphics[scale=0.5]{diffSome4}
  \vspace{-5mm}
  \caption*{\label{fig:diffSome4}\usebox{\diffSomefour}}
\endminipage\hfill
\end{figure}

\begin{figure}[H]
\minipage[t]{0.48\textwidth}
  \center
  \hspace*{20mm}\includegraphics[scale=0.6]{diffSome5}
  \vspace{-5mm}
  \caption*{\label{fig:diffSome5}\usebox{\diffSomefive}}
\endminipage\hfill
\minipage[t]{0.48\textwidth}
  \center
  \hspace*{15mm}\includegraphics[scale=0.5]{diffSome6}
  \vspace{-5mm}
  \caption*{\label{fig:diffSome6}\usebox{\diffSomesix}}
\endminipage\hfill
\end{figure}

\begin{figure}[H]
\minipage[t]{0.48\textwidth}
  \center
  \hspace*{15mm}\includegraphics[scale=0.5]{diffSome7}
  \vspace{-5mm}
  \caption*{\label{fig:diffSome7}\usebox{\diffSomeseven}}
\endminipage\hfill
\minipage[t]{0.48\textwidth}
  \center
  \hspace*{15mm}\includegraphics[scale=0.5]{diffSome8}
  \vspace{-5mm}
  \caption*{\label{fig:diffSome8}\usebox{\diffSomeeight}}
\endminipage\hfill
\end{figure}

\begin{figure}[H]
\minipage[t]{0.48\textwidth}
  \center
  \hspace*{15mm}\includegraphics[scale=0.5]{sameAll9}
  \vspace{-5mm}
  \caption*{\label{fig:diffSome9}\usebox{\diffSomenine}}
\endminipage\hfill
\end{figure}

\newpage
\subsection{A matroid with all row-equivalent signed-graphic representations\label{App:sameAll}}

The following dyadic matroid of rank 6 on 10 elements whose $\mathbb{D}$-representation is
	\begin{align*}
	\begin{footnotesize}
	\left(\begin{array}{rrrrrrrrrr}
	1 & 0 & 0 & 0 & 0 & 2 & 0 & 0 & 1 &	0 \\
	0 & 1 & 0 & 0 & 0 & 1 & 0 & 2 & 0 &	0 \\
	0 & 0 & 1 & 0 & 0 & 0 & 1 & 1 & 1 &	0 \\
	0 & 0 & 0 & 1 & 0 & 0 & 1 & 1 & 0 &	0 \\
	0 & 0 & 0 & 0 & 1 & 1 & 0 & 2 & 2 &	0 \\
	0 & 0 & 0 & 0 & 0 & 2 & 2 & 0 & 0 &	1
	\end{array}\right)
	\end{footnotesize}
	\end{align*}
has 15 signed-graphic representations, every pair of which is row-equivalent:

\begin{figure}[H]
\minipage[t]{0.48\textwidth}
  \center
  \hspace*{5mm}\includegraphics[scale=0.5]{sameAll1}
  \vspace{-5mm}
  \caption*{\label{fig:sameAll1}\usebox{\sameAllone}}
\endminipage\hfill
\minipage[t]{0.48\textwidth}
  \center
  \hspace*{15mm}\includegraphics[scale=0.5]{sameAll2}
  \vspace{-5mm}
  \caption*{\label{fig:sameAll2}\usebox{\sameAlltwo}}
\endminipage\hfill
\end{figure}

\begin{figure}[H]
\minipage[t]{0.48\textwidth}
  \center
  \hspace*{15mm}\includegraphics[scale=0.5]{sameAll3}
  \vspace{-5mm}
  \caption*{\label{fig:sameAll3}\usebox{\sameAllthree}}
\endminipage\hfill
\minipage[t]{0.48\textwidth}
  \center
  \hspace*{5mm}\includegraphics[scale=0.5]{sameAll4}
  \vspace{-5mm}
  \caption*{\label{fig:sameAll4}\usebox{\sameAllfour}}
\endminipage\hfill
\end{figure}

\begin{figure}[H]
\minipage[t]{0.48\textwidth}
  \center
  \hspace*{15mm}\includegraphics[scale=0.6]{sameAll5}
  \vspace{-5mm}
  \caption*{\label{fig:sameAll5}\usebox{\sameAllfive}}
\endminipage\hfill
\minipage[t]{0.48\textwidth}
  \center
  \hspace*{10mm}\includegraphics[scale=0.5]{sameAll6}
  \vspace{-5mm}
  \caption*{\label{fig:sameAll6}\usebox{\sameAllsix}}
\endminipage\hfill
\end{figure}

\begin{figure}[H]
\minipage[t]{0.48\textwidth}
  \center
  \hspace*{15mm}\includegraphics[scale=0.6]{sameAll7}
  \vspace{-5mm}
  \caption*{\label{fig:sameAll7}\usebox{\sameAllseven}}
\endminipage\hfill
\minipage[t]{0.48\textwidth}
  \center
  \hspace*{15mm}\includegraphics[scale=0.5]{sameAll8}
  \vspace{-5mm}
  \caption*{\label{fig:sameAll8}\usebox{\sameAlleight}}
\endminipage\hfill
\end{figure}

\begin{figure}[H]
\minipage[t]{0.48\textwidth}
  \center
  \hspace*{5mm}\includegraphics[scale=0.5]{sameAll9}
  \vspace{-5mm}
  \caption*{\label{fig:sameAll9}\usebox{\sameAllnine}}
\endminipage\hfill
\minipage[t]{0.48\textwidth}
  \center
  \hspace*{15mm}\includegraphics[scale=0.5]{sameAll10}
  \vspace{-5mm}
  \caption*{\label{fig:sameAll10}\usebox{\sameAllten}}
\endminipage\hfill
\end{figure}

\begin{figure}[H]
\minipage[t]{0.48\textwidth}
  \center
  \hspace*{15mm}\includegraphics[scale=0.5]{sameAll11}
  \vspace{-5mm}
  \caption*{\label{fig:sameAll11}\usebox{\sameAlleleven}}
\endminipage\hfill
\minipage[t]{0.48\textwidth}
  \center
  \hspace*{5mm}\includegraphics[scale=0.5]{sameAll12}
  \vspace{-5mm}
  \caption*{\label{fig:sameAll12}\usebox{\sameAlltwelve}}
\endminipage\hfill
\end{figure}

\begin{figure}[H]
\minipage[t]{0.48\textwidth}
  \center
  \hspace*{10mm}\includegraphics[scale=0.5]{sameAll13}
  \vspace{-5mm}
  \caption*{\label{fig:sameAll13}\usebox{\sameAllthirteen}}
\endminipage\hfill
\minipage[t]{0.48\textwidth}
  \center
  \hspace*{15mm}\includegraphics[scale=0.5]{sameAll14}
  \vspace{-5mm}
  \caption*{\label{fig:sameAll14}\usebox{\sameAllfourteen}}
\endminipage\hfill
\end{figure}

\begin{figure}[H]
\minipage[t]{0.48\textwidth}
  \center
  \hspace*{15mm}\includegraphics[scale=0.5]{sameAll15}
  \vspace{-5mm}
  \caption*{\label{fig:sameAll15}\usebox{\sameAllfifteen}}
\endminipage\hfill
\end{figure}

\newpage
\subsection{A matroid with all non-row-equivalent signed-graphic representations}

The following dyadic matroid of rank 5 on 9 elements whose $\mathbb{D}$-representation is

	\begin{align*}
	\begin{footnotesize}
	\left(\begin{array}{rrrrrrrrr}
	1 & 0 & 0 & 0 & 2 & 0 & 1 & 1 & 0 \\
	0 & 1 & 0 & 0 & 1 & 1 & 1 & 1 & 0 \\
	0 & 0 & 1 & 0 & 1 & 1 & 1 & 0 & 0 \\
	0 & 0 & 0 & 1 & 2 & 0 & 0 & 2 & 0 \\
	0 & 0 & 0 & 0 & 1 & 2 & 2 & 2 & 1
	\end{array}\right)
	\end{footnotesize}
	\end{align*}

has 3 signed-graphic representations, every pair of which is not row-equivalent:

\begin{figure}[H]
\minipage[t]{0.48\textwidth}
  \center
  \includegraphics[scale=0.5]{diffAll1}
  \vspace{-5mm}
  \caption*{\label{fig:diffAll1}\usebox{\diffAllone}}
\endminipage\hfill
\minipage[t]{0.48\textwidth}
  \center
  \includegraphics[scale=0.6]{diffAll2}
  \vspace{-5mm}
  \caption*{\label{fig:diffAll2}\usebox{\diffAlltwo}}
\endminipage\hfill
\end{figure}

\begin{figure}[H]
\minipage[t]{0.48\textwidth}
  \center
  \hspace*{20mm}\includegraphics[scale=0.6]{diffAll3}
  \vspace{-5mm}
  \caption*{\label{fig:diffAll2}\usebox{\diffAllthree}}
\endminipage\hfill
\end{figure}

\eat{\newpage
\section{Sage code}\label{App:Sage}
\lstset{language=Python, basicstyle=\ttfamily\tiny}

\subsection{Generating signed-graphic representations} Below is the Sage code for generating all signed-graphic representations of a given dyadic matroid $M$.

\lstinputlisting{lisa.sage}

\eat{\subsection{Generating 3-connected, simple, cosimple, uniquely $\mathbb{D}$-representable matroids of size $\leq 10$}
\lstinputlisting{graphic.sage}}

\newpage
\subsection{Plotting signed graphs} Below is the Sage code for plotting the signed graph of a given signed-graphic representation.

\lstinputlisting{plot.sage}}

\newpage\section*{Acknowledgements}
I would like to express my very great appreciation to Professor Stefan van Zwam, my research advisor, for teaching me about matroid theory from the ground up, and for his patient guidance and valuable suggestions during the planning and development of this research work. I would also like to thank Professor Rudi Pendavingh for his algorithm that allowed us to generate all signed-graphic representations of a given dyadic matroid.

\end{document}